\documentclass[11pt]{amsart}

\usepackage{amssymb}
\usepackage{amsthm}
\usepackage{amsmath}
\usepackage{mathtools}
\usepackage[all,knot]{xy}

\usepackage[utf8]{inputenc}
\usepackage[backend=biber, style=alphabetic, sorting=nyt]{biblatex}
\addbibresource{refs.bib}

\usepackage{hyperref}
\usepackage{float}
\usepackage{subcaption}
\usepackage{graphicx}

\newtheorem{thm}{Theorem}[section]
\newtheorem{lem}[thm]{Lemma}
\newtheorem{prop}[thm]{Proposition}
\newtheorem{cor}[thm]{Corollary}
\newtheorem{conj}[thm]{Conjecture}
\newtheorem{defn}[thm]{Definition}
\newtheorem{question}[thm]{Question}

\theoremstyle{remark}
\newtheorem*{rmk}{Remark}

\newcommand{\Z}{\mathbb Z}
\newcommand{\R}{\mathbb R}

\newcommand{\wt}{\widetilde}
\DeclareMathOperator{\br}{br}
\DeclareMathOperator{\lk}{lk}
\DeclareMathOperator{\con}{Con}

\usepackage{fullpage}

\begin{document}

\title{Do Link Polynomials Detect Causality In Globally Hyperbolic Spacetimes?}
\author{Samantha Allen}
\author{Jacob H. Swenberg}
\address{Samantha Allen: Department of Mathematics, Dartmouth College, NH 03755}\email{Samantha.G.Allen@dartmouth.edu}
\address{Jacob H. Swenberg: Department of Mathematics, Dartmouth College, NH 03755}\email{Jacob.H.Swenberg.21@dartmouth.edu}

\dedicatory{Dedicated to the memory of John Conway and Vaughan Jones.}

\maketitle

\begin{abstract}
Let $X$ be a $(2+1)$-dimensional globally hyperbolic spacetime with a Cauchy surface $\Sigma$ whose universal cover is homeomorphic to $\R^2$.
We provide empirical evidence suggesting that the Jones polynomial detects causality in $X$. 
We introduce a new invariant of certain tangles related to the Conway polynomial, and prove that the Conway polynomial does not detect the connected sum of two Hopf links among relevant 3-component links, which suggests that the Conway polynomial does not detect causality in the scenario described.
\end{abstract}

\section{Introduction}

%

Let $X$ be a $(2+1)$-dimensional globally hyperbolic spacetime with Cauchy surface $\Sigma$ homeomorphic to $\R^2$, and let $N$ be the set of future-directed null geodesics in $X$. The set $N$ can be identified with the spherical cotangent bundle $ST^*\Sigma$ of $\Sigma$, which in this case is homeomorphic to a solid torus $S^1\times \R^2$. The \emph{sky} of $x \in X$, denoted $S_x \subset N$, is the set of all future-directed null geodesics through $x$. The sky $S_x$ is homeomorphic to a circle, and viewed as a subset of the solid torus $S^1\times \R^2$, $S_x$ is isotopic to $S^1\times\{0\}$. For more explanation, see \cite{chernov_khovanov_2020} or \cite{natario_linking_2004}.

Nemirovski and Chernov \cite[Thm B]{chernov_legendrian_2010} proved the Low conjecture, which says that as long as $\Sigma$ is not a closed 2-manifold, two events $x,y \in X$ are causally related if and only if their skies $S_x\sqcup S_y$ are linked. In our context ($\Sigma$ homeomorphic to $\R^2$), \emph{linked} means either $S_x\cap S_y \neq \varnothing$, or $S_x\sqcup S_y$ is not isotopic in $N$ to $S^1\times\{a\}\sqcup S^1\times\{b\}$ for $a,b \in \R^2$. Nemirovski and Chernov actually proved more; they showed that the relationship between linking and causality holds as long as $\Sigma$ is not homeomorphic to $S^2$ or $\R P^2$. They also proved the Legendrian Low conjecture, which is analogous to the Low conjecture but for higher-dimensional spacetimes, where topological linking is replaced by Legendrian linking \cite[Thm A]{chernov_legendrian_2010}.

A natural question is whether linking of $S_x$ and $S_y$, and thus causality, could be detected by various link invariants. Nat\'ario and Tod \cite{natario_linking_2004} provided a large family of pairs of skies corresponding to causally related events such that, for each pair, the associated link has nontrivial Kauffman polynomial.  The Kauffman polynomial is related to the Jones polynomial $V(L)$ by a change of variables, but there is another invariant that contains strictly more information than both. Specifically, Khovanov homology provides a ``categorification" of the Jones polynomial \cite{bar-natan_khovanovs_2002}. In fact, Khovanov homology detects causality in $X$ \cite[Theorem 1]{chernov_khovanov_2020}. Another common link polynomial is the \emph{Alexander-Conway polynomial} (also called the \emph{Conway polynomial}) $\nabla(L)$, which is categorified by link Floer homology \cite{kauffman_alexanderconway_2016}. Chernov, Martin, and Petkova have suggested that link Floer homology will also detect causality in this setting \cite{chernov_khovanov_2020}. The related knot polynomials, the Jones polynomial and the Conway polynomial, are strictly weaker link invariants than their respective categorifications. On the other hand, the work of \cite{natario_linking_2004} indicates that these polynomials still might detect causality.

By a remark in \cite{chernov_khovanov_2020}, if an invariant can detect causality in $X$ as described, then causality can be detected in any $(2+1)$-dimensional globally hyperbolic spacetime $X'$ whose Cauchy surface $\Sigma'$ is not homeomorphic to $S^2$ or $\R P^2$. The reason is that the universal cover $\wt{X'}$ of such a spacetime is also a globally hyperbolic spacetime with Cauchy surface $\wt{\Sigma'}$ that is a universal cover of $\Sigma'$. By lifting of paths, causal relationships between points in $X'$ are equivalent to causal relationships between points in $\wt{X'}$. If $\Sigma'$ is not homeomorphic to $S^2$ or $\R P^2$, then the universal cover $\wt{\Sigma'}$ is homeomorphic to $\R^2$. As a result, it will be sufficient for our purposes to consider the case described above, where $\Sigma$ is homeomorphic to $\R^2$.

In this paper, we show that the Conway polynomial does not detect causality (linking) in the given setting, and we conjecture that the Jones polynomial does detect causality. At first glance, this hypothesis might seem strange: there are infinitely many nontrivial $k$-component links indistinguishable from unlinks via their Jones polynomials \parencite[Cor. 3.3.1]{eliahou_infinite_2003}. There are links with similarly trivial Conway polynomials, such as L10n32 and L10n59 \cite{linkinfo}.
However, there are topological restrictions on the links being considered. Components of links given by skies of events exist in a solid torus, and each sky must be isotopic to a longitude of the solid torus $S^1\times\R^2$. While this solid torus has a natural embedding into $\R^3$ as the neighborhood of an unknotted circle, there are links which are nontrivial in $S^1\times\R^2$, yet become unlinked when embedded into $\R^3$ via this embedding. In Section 3, we exhibit an infinite family of such links that have nontrivial Jones polynomial.  In a sense, these are the simplest type of links one could consider which satisfy the constraints.

\begin{prop}\label{JonesLinks}
There are infinitely many nontrivial 2-component links in $S^1\times\R^2$ that are unlinked when embedded into $\R^3$ but whose linking in $S^1 \times \mathbb{R}$ can be detected by the Jones polynomial.
\end{prop}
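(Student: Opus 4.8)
The plan is to recover the information that the standard embedding $S^1\times\R^2\hookrightarrow\R^3$ discards by reintroducing the core of the complementary solid torus. Realize $ST:=S^1\times\R^2\cong S^1\times D^2$ as a tubular neighborhood of an unknot in $S^3$; then $S^3\setminus ST$ is again a solid torus, and its core $A$ is an unknotted \emph{axis} meeting every meridian disk of $ST$ once. An ambient isotopy of a link $L\subset ST$ inside $ST$ is the same as an ambient isotopy of $L\cup A$ in $S^3$ carrying $A$ to itself, so if $V(L\cup A)\neq V(L^{0}\cup A)$, where $L^{0}=S^1\times\{a\}\sqcup S^1\times\{b\}$ is the standard pair of longitudes, then $L$ is not isotopic to $L^{0}$ in $ST$, i.e.\ $L$ is nontrivial there. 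One checks directly that $L^{0}\cup A$ is the connected sum of two Hopf links (the axis threads both rings once and the rings are split), exactly the link appearing in our main theorem. It therefore suffices to produce an infinite family $\{L_k\}_{k\ge 1}$ of two-longitude links in $ST$ that become the two-component unlink under the standard embedding into $\R^3$, but for which the Jones polynomials $V(L_k\cup A)$ are pairwise distinct and all differ from that of the connected sum of two Hopf links.

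To build the family I would start from $L^{0}\cup A$ and modify only a small ball in which the two longitudes run parallel. There I replace the trivial two-strand tangle by a tangle $T_k$ in which the strands clasp and unclasp $k$ times, with the clasps arranged in canceling pairs that are threaded by the axis $A$. Homologically nothing changes, so each component of the resulting link $L_k$ still winds once around the $S^1$-factor and is a longitude, and since the clasps of each pair have opposite sign, $\lk(L_1,L_2)=0$. Equivalently, in the complement of the fixed two-component unlink $U=L_1\sqcup L_2\subset S^3$ I am realizing the axis by the unknot whose class in $\pi_1(S^3\setminus U)=F(x_1,x_2)$ is $x_1x_2[x_1,x_2]^{k}$ rather than the standard $x_1x_2$; distinct words give non-isotopic axes, hence non-isotopic links in $ST$.

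Next I would verify the two defining properties. That $L_k$ is the unlink in $\R^3$ follows by an explicit isotopy: once $A$ is deleted, the disks that the canceling clasp pairs were forced to route around $A$ become free, the clasps cancel in pairs by Reidemeister~II moves, and the two components come apart as split unknots. That each component is a genuine longitude is built into the construction via the winding-number computation above. I would defer the pairwise non-isotopy of the $L_k$ in $ST$ to the Jones computation, where it falls out for free once the polynomials are shown to be distinct.

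Finally I would compute $V(L_k\cup A)$ with the Kauffman bracket. Because the strands inside $T_k$ run parallel to the core, resolving one clasp by the skein relation expresses $\langle L_k\cup A\rangle$ through $\langle L_{k-1}\cup A\rangle$ and a fixed correction term, giving a linear recursion in $k$ with coefficients in $\Z[A^{\pm1}]$ that I can solve in closed form; organizing the calculation through the Kauffman bracket skein module of $ST$, on which $A$ acts by a fixed evaluation, makes the recursion transparent. The plan is then to read off from the closed form that the span of $V(L_k\cup A)$ is strictly increasing in $k$, so the polynomials are pairwise distinct and none equals that of the connected sum of two Hopf links. I expect this last step to be the main obstacle: the very cancellation that trivializes $L_k$ in $\R^3$ also tends to collapse the skein computation, so the clasps must be arranged so that exactly the part of the entanglement recorded by $A$ survives the recursion and forces the degree to grow. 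Controlling the extremal terms of the bracket through the recursion, rather than merely bounding them, is where the real work lies.
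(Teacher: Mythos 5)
Your framework coincides with the paper's, just phrased dually: the core $A$ of the complementary solid torus is isotopic, in the complement of the link, to the meridian $\mu$ of $S^1\times\R^2$ that the paper appends to form what it calls a 2-sky-like link; $L^0\cup A$ is indeed the connected sum $H$ of two Hopf links; and your reduction (``$L$ is nontrivial in $S^1\times\R^2$ provided $V(L\cup A)\neq V(H)$'') is exactly the paper's. Even your construction idea is the paper's in spirit: its family is $U(n,-n)=C(TU(n,-n))$, where $n$ positive and $n$ negative twists between the two longitudes sit on opposite sides of the meridian, so that erasing the meridian lets the twists cancel and the two components fall apart in $\R^3$.

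The problem is that your proposal stops exactly where the proof begins. The family $L_k$ is never pinned down as a diagram (the clasp arrangement ``threaded by the axis'' and the word $x_1x_2[x_1,x_2]^k$ do not determine a link on which one can run a skein computation), and the step that carries the entire content of the proposition --- showing the $V(L_k\cup A)$ are pairwise distinct and all differ from $V(H)$ --- is deferred, with the difficulty correctly identified but not resolved: the same cancellation that unknots $L_k$ in $\R^3$ can, for a careless arrangement, also collapse the bracket recursion, and nothing in the proposal rules out $V(L_k\cup A)=V(H)$ for your particular clasps. (The side claim that distinct words give non-isotopic axes is also unjustified, though not load-bearing since you defer distinctness to the Jones computation.) The paper closes this gap with a concrete calculation: it proves $\langle C(T)\rangle=(-A^6-A^{-6})\langle T^N\rangle+(-A^4-A^{-4}+2)\langle T^D\rangle$, uses closed formulas for brackets of twist-region closures, and deduces $\mathrm{span}(\langle U(n,-n)\rangle)=8n+24>16=\mathrm{span}(\langle H\rangle)$, which yields nontriviality and pairwise distinctness in one stroke. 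To complete your argument you would need to fix an explicit diagram for $L_k\cup A$ and carry out this kind of extremal-term analysis; your skein-module recursion could plausibly do it, but as written it is a plan, not a proof.
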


Following the example of Eliahou, Kauffman, and Thistlethwaite \cite{eliahou_infinite_2003}, in Section 4, we define for each four-ended tangle $T$ with a specific orientation (which we will call a left-right oriented tangle) an invariant $\con(T)$ related to the Conway polynomial. This invariant $\con(T)$ is an element of the free $\Z[z]$-module $\Z[z]^2$, and can be used to calculate the Jones and Conway polynomials of links depending on a tangle:

\begin{thm}\label{NewInvt}
Let $L$ be a diagrammatic operator which takes a single left-right oriented tangle $T$ and yields an oriented link $L(T)$. Then this operator induces a $\Z[z]$-module homomorphism $\varphi_L : \Z[z]^2\to \Z[z]$ such that $\nabla(L(T)) = \varphi_L(\con(T))$.


\end{thm}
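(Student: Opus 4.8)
The plan is to exploit the structural fact underlying the definition of $\con$: the Conway skein module of left-right oriented four-ended tangles is a free $\Z[z]$-module of rank $2$, freely generated by the two crossingless basis tangles, which I will call $B_1$ and $B_2$. By construction $\con(B_1) = (1,0)$ and $\con(B_2) = (0,1)$ are the standard basis vectors of $\Z[z]^2$. Given the diagrammatic operator $L$, I would define $\varphi_L\colon \Z[z]^2 \to \Z[z]$ on this basis by $\varphi_L(1,0) = \nabla(L(B_1))$ and $\varphi_L(0,1) = \nabla(L(B_2))$, and then extend $\Z[z]$-linearly. Being the linear extension of a set map on a basis, $\varphi_L$ is automatically a $\Z[z]$-module homomorphism, so the only real content is the identity $\nabla(L(T)) = \varphi_L(\con(T))$ for every left-right oriented tangle $T$.

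To establish that identity I would show that the assignment $T \mapsto \nabla(L(T))$ obeys the very relations that define $\con$, and then compare the two functions on the generators. Concretely, pick any crossing of $T$ and let $T_+, T_-, T_0$ be the tangles obtained by making that crossing positive, negative, and oriented-smoothed; by the definition of $\con$ these satisfy $\con(T_+) - \con(T_-) = z\,\con(T_0)$ in $\Z[z]^2$. Because $L$ is a diagrammatic operator that only manipulates the closure of the tangle, it leaves the chosen crossing ball untouched, so $L(T_+), L(T_-), L(T_0)$ are three oriented links agreeing outside a small ball and realizing exactly the Conway skein triple there. The defining skein relation for $\nabla$ then gives $\nabla(L(T_+)) - \nabla(L(T_-)) = z\,\nabla(L(T_0))$. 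Hence $T \mapsto \varphi_L(\con(T))$ and $T \mapsto \nabla(L(T))$ satisfy identical skein relations and agree on the crossingless generators by construction; an induction on the number of crossings of $T$, with the crossingless tangles as base case, then forces equality for all $T$.

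The hard part will be the orientation bookkeeping and the treatment of closed components when passing from the tangle-level skein relation to the link-level one. I must check that the left-right orientation on the ends of $T$ is respected by each of the three resolutions, so that $L(T_+), L(T_-), L(T_0)$ are honestly oriented links and the oriented smoothing $T_0$ genuinely plays the role of $L_0$ in the Conway triple rather than a disoriented smoothing; this compatibility is precisely what the ``left-right'' hypothesis is designed to supply. I must also handle crossingless tangles carrying extra disjoint circles: such a circle either splits off in $L(T)$, whence $\nabla(L(T)) = 0$ matches the corresponding vanishing in the skein module, or it is absorbed in the reduction to $B_1$ and $B_2$. Finally, before the induction can close I need $L(B_1)$ and $L(B_2)$ to be well-defined up to isotopy, so that their Conway polynomials are genuine elements of $\Z[z]$; confirming that the rank-$2$ freeness from the definition of $\con$ is compatible with an arbitrary closure operator $L$ is the last point to dispatch.
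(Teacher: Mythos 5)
There are two genuine gaps here, and both stem from the same misconception. First, your claimed generating set does not exist: among \emph{left-right oriented} tangles there is only \emph{one} crossingless tangle, namely $0$ (possibly with split circles). The tangle $\infty$ admits no left-right orientation at all --- the paper notes this explicitly, and it is the entire reason the left-right class needs separate treatment --- so your second ``crossingless basis tangle'' $B_2$ cannot exist. The correct spanning pair, the one implicit in Definition \ref{ConVectorDef}, is $\{0,1\}$, where $1$ is the \emph{single-crossing} tangle; accordingly $\varphi_L$ must be defined by $\varphi_L(1,0)=\nabla(L(0))$ and $\varphi_L(0,1)=\nabla(L(1))$. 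The picture you describe, with two crossingless generators, is the one appropriate to diagonally oriented tangles (where $0$ and $\infty$ both occur, as in Lemma \ref{DiagonalCon}) or to the Kauffman bracket setting of \cite{eliahou_infinite_2003}, not to the left-right case this theorem is about.

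Second, your closing induction ``on the number of crossings of $T$, with the crossingless tangles as base case'' cannot terminate as stated. In the Conway skein triple $T_+,T_-,T_0$ the smoothing $T_0$ has one fewer crossing, but $T_+$ and $T_-$ have the \emph{same} number of crossings, so a crossing switch never decreases your induction measure; moreover, with the corrected generators, the tangle $1$ is not covered by a crossingless base case. This termination problem is precisely the hard content of the paper's Theorem \ref{ConReduceThm}, which is handled there by a double induction: on the crossing number $n$ and, within that, on $N=u_1(T)+o_2(T)$, the number of undercrossings of strand 1 and overcrossings of strand 2 with other components; crossing switches strictly decrease $N$, and the $N=0$ case is a descending-type diagram treated via connected sums. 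Note also that once Theorem \ref{ConReduceThm} and Definition \ref{ConVectorDef} are granted --- which is what your appeal to ``the structural fact underlying the definition of $\con$'' amounts to --- no induction is needed at all: the paper's proof is the one-line observation that $\nabla(L(T))=p(T)\nabla(L(0))+q(T)\nabla(L(1))$ is $\Z[z]$-linear in $\con(T)=(p(T),q(T))$. So as written your argument both assumes the hard step and then re-derives it incorrectly; repaired to use the basis $\{0,1\}$ and either the double induction or a direct appeal to Theorem \ref{ConReduceThm}, it becomes the paper's proof.
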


\noindent We combine this theorem and results of Kauffman \cite{kauffman_conway_1981} to produce an explicit example indicating that the Conway polynomial does not detect causality in the given setting.

After examining the Jones polynomial in more detail, we then make the following conjecture:

\begin{conj}
Let $X$ be a $(2+1)$-dimensional globally hyperbolic spacetime with Cauchy surface $\Sigma$ not homeomorphic to $S^2$ or $\R P^2$. Then the Jones polynomial detects causality between any two events in $X$.
\end{conj}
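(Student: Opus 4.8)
The plan is to reduce the conjecture to a concrete statement about links in a solid torus and then to isolate a structural property of sky-links that forces the Jones polynomial away from its trivial value exactly in the causally related case. First I would invoke the reduction already recorded in the introduction: passing to the universal cover lets us assume $\Sigma \cong \R^2$, so that the space of null geodesics is the solid torus $N = S^1 \times \R^2$ and each sky $S_x$ is a longitude isotopic to $S^1 \times \{0\}$. By the Low conjecture of Nemirovski and Chernov, $x$ and $y$ are causally related if and only if $S_x \sqcup S_y$ is linked in $N$, so it suffices to show that the Jones polynomial distinguishes a linked pair of longitudes from the trivial pair $S^1 \times \{a\} \sqcup S^1 \times \{b\}$. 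Since the Jones polynomial is an invariant of links in $S^3$ and the standard embedding of $N$ can unlink a nontrivially linked pair (as Proposition \ref{JonesLinks} shows), I would work with the refinement used there: record the link together with the core $C$ of the complementary solid torus, equivalently take the image of $S_x \sqcup S_y$ in the Kauffman bracket skein module of $N$. Detection by the Jones polynomial then becomes the statement that this skein class, or equivalently the Jones polynomial of $S_x \sqcup S_y \sqcup C \subset S^3$, differs from the value it takes on the unlinked pair.

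The second step is to obtain a workable normal form for sky-links. Each sky is the front lift of the wavefront of a point, and when $x$ and $y$ are causally related the wavefront of $x$ sweeps through $y$; this geometry should let one present $S_x \sqcup S_y$ by a diagram on the annulus in which both components run once around the $S^1$ factor and the crossings between them all carry the same sign. I would make this precise by showing that every such link arises as $L(T)$ for a left-right oriented tangle $T$ of the type introduced in Section 4 under a fixed annular-closure operator $L$, and by arguing that the admissible tangles form a one-parameter family indexed by a nonnegative ``interaction number'' that vanishes precisely in the unlinked case. The target is a monotonicity statement: as the interaction number grows, a computable quantity attached to the skein class---for instance the top or bottom degree in the $A$-variable, or the top-weight coefficient in the core-cable basis---changes strictly.

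The third step is the skein-module computation itself. Expanding the class of $S_x \sqcup S_y$ in the solid-torus Kauffman bracket skein module, whose basis is given by powers of the core, I would show that the off-diagonal interaction between the two longitudes contributes a term that cannot cancel against the diagonal part already present in the unlink. Here the uniform crossing structure from the previous step is essential: it should make the bracket expansion adequate in the relevant sense, so that the extreme terms survive and the skein class is pinned down by its leading coefficient. If these terms persist, the Jones polynomial of $S_x \sqcup S_y \sqcup C$ is forced off its unlink value, and detection follows.

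The main obstacle is exactly the gap between the Jones polynomial and a complete invariant. Nothing in the reduction rules out, a priori, a causally related pair whose intricate sky-link happens to share the Jones polynomial of the unlink, in the spirit of the Eliahou--Kauffman--Thistlethwaite examples cited in the introduction; the parallel with the Conway polynomial, which Theorem \ref{NewInvt} is used to show fails to detect even a connected sum of Hopf links among the relevant links, is a warning that naive skein manipulations need not suffice. The crux is therefore Step 2: proving that sky-links genuinely inherit the rigid, all-same-sign crossing structure---adequacy, or an equivalent property such as being alternating or fibered in the annulus---since it is this structure, rather than the generic behavior of the Jones polynomial, that would guarantee the extreme terms of the bracket do not vanish. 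I expect that establishing this rigidity directly from the Legendrian and wavefront geometry, rather than by a case-by-case diagrammatic analysis, is where the real difficulty lies.
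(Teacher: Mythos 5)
You are attempting to prove a statement that the paper itself presents only as a \emph{conjecture}: there is no proof of it in the paper, only empirical evidence (the database check in \cite{linkinfo}, Theorem \ref{U(n,m)}) together with an explanation of why a proof is currently out of reach. Your proposal has two genuine gaps, and the second is one the paper explicitly identifies as being at least as hard as a famous open problem. First, your Step 2 normal form is false. While each sky is isotopic to a longitude of $N$, it does not follow that a 2-sky-like link admits an annular diagram in which both components run monotonically once around the $S^1$ factor, nor that the admissible configurations form a one-parameter family indexed by an ``interaction number.'' The paper's remark at the end of Section 3 exhibits a 2-sky-like link that does not even appear to be of the form $C(T)$, i.e.\ it has no meridional disk meeting each component exactly once; and already within the family $C(T)$ the tangles form an enormous class, not a ray indexed by one parameter. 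The all-same-sign crossing claim also fails: the links $U(n,-n)$ are nontrivially linked in the solid torus although their two longitudinal components form an unlink in $\R^3$, which is incompatible with any positivity/adequacy structure on the inter-component crossings of the kind your argument needs.

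Second, the ``extreme terms survive'' claim in Step 3 is precisely the point that cannot at present be established, and the paper shows why. Even restricting to links $C(T)$, detection requires ruling out a tangle $T\neq 0$ whose bracket behaves like that of the trivial tangle; by Theorem \ref{JonesConjThm} (Sikora), the existence of a tangle $T\neq 0$ with $\br(T)=[rA^n\ 0]^t$ is \emph{equivalent} to the existence of a nontrivial knot with trivial Jones polynomial, which is open, and even granting no such tangle exists one still needs an affirmative answer to Question \ref{BracketVectorQuestion} to conclude that $\langle C(T)\rangle = (-A^3)^n\langle C(0)\rangle$ forces $\br(T)=(-A^3)^n\br(0)$. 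Moreover, the paper's own Theorem \ref{JSLinkThm} shows that for the closely analogous Conway skein theory such cancellation \emph{does} occur: the 2-sky-like links $C(1*T_0(n))$ are indistinguishable from the connected sum of two Hopf links by $\nabla$. So monotonicity of extreme bracket degrees is not a generic feature one can assume; it would have to be proven, and as you yourself concede, your Step 2 rigidity claim --- on which everything rests --- is exactly the unproven crux. As written, the proposal is a research program (one whose first structural step contradicts examples in the paper), not a proof.
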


\noindent \textit{Acknowledgements.}  Both authors would like to thank Vladimir Chernov and Ina Petkova for suggesting the problem and their guidance throughout the project. The first author would like to additionally thank Charles Livingston for a helpful conversation. The second author would like to thank Vanessa Pinney for her insight on enumeration of links. JS received support from NSF Grant DMS-1711100.

\section{Background} \label{sec:background}
In this section we give definitions and useful results related to the Conway polynomial, the Jones polynomial, and tangles.

Let $L^1$ be a link diagram with a distinguished crossing of the form $\xygraph{!{0;/r1.0pc/:}[u(0.5)]!{\xunderv}}\;$. Denote by $L^0$ and $L^{\infty}$ the resulting link diagrams arising from performing smoothing changes at the distinguished crossing, as in Figure \ref{fig:kauffman}.

\begin{figure}[ht]
\centering
\captionsetup{justification=centering}
\begin{tabular}{ccccc}
\xygraph{!{0;/r2.2pc/:}[u(1.1)]!{\xunderv}} & $\;\;\;\;$ &

 \includegraphics[width = 0.07\textwidth]{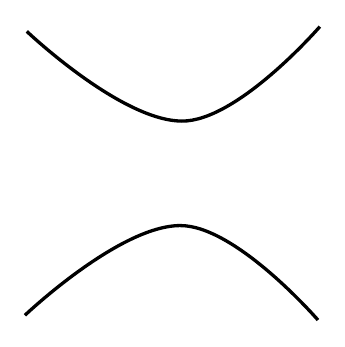} & $\;\;\;\;$ 
    & \includegraphics[width = 0.07\textwidth]{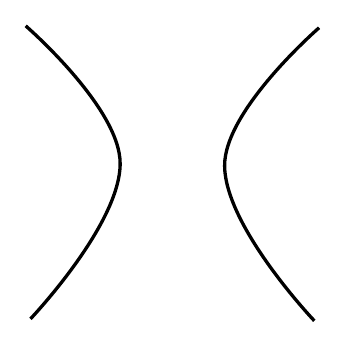} \\
    &&\\
 $L^1$ &$\;\;\;$ & $L^0$ & $\;\;\;$ & $L^\infty$\\
\end{tabular}
\caption{Local changes made near a crossing in a link $L$.}
\label{fig:kauffman}
\end{figure}

\begin{defn}\label{BracketDef}
The Kauffman bracket of a link $L$, $\langle L \rangle$, is the Laurent polynomial in a variable $A$ defined by
	\[ \langle \bigcirc\rangle = 1, 
		\quad \langle \bigcirc \cup L\rangle = (-A^2-A^{-2})\langle L\rangle, \]
	\[ \left\langle L^1 \right\rangle = A \left\langle L^0 \right\rangle + A^{-1} \left\langle L^\infty \right\rangle.
	\]
\end{defn}
\noindent The Kauffman bracket is not a link invariant; one must adjust for the writhe of the link diagram.
\begin{defn}\label{JonesDef}
The Kauffman polynomial of a link $L$, $K(L)$, is defined to be 
    \[K(L) := (-A^3)^{-w(L)}\langle L\rangle \]
where $w(L)$ is the writhe of the diagram.  The Jones polynomial $V(L)$ can be obtained by substituting $t^{-1/4}$ for $A$ in the Kauffman polynomial \cite[Thm 2.8]{kauffman_state_1987}.
\end{defn}
\begin{thm}{\cite[Thm 2.6]{kauffman_state_1987}}
The Kauffman polynomial (and, therefore, the Jones polynomial) is a link invariant.
\end{thm}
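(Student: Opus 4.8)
The plan is to prove the statement via Reidemeister's theorem: two diagrams represent the same (oriented) link precisely when they differ by a finite sequence of planar isotopies and Reidemeister moves RI, RII, RIII. It therefore suffices to check that $K(L)$ is unaffected by each of the three moves. I would separate this into two stages: first analyze how the Kauffman bracket $\langle L\rangle$ behaves under the moves, and then show that the correction factor $(-A^3)^{-w(L)}$ repairs the single move under which the bracket is not invariant. Before doing this, I would note that the recursion of Definition \ref{BracketDef} is well posed: resolving the $n$ crossings in any order and clearing disjoint circles produces the state sum $\langle L\rangle=\sum_{s}A^{\alpha(s)-\beta(s)}(-A^2-A^{-2})^{|s|-1}$, where $s$ ranges over the $2^n$ smoothing states, $\alpha,\beta$ count the two smoothing types, and $|s|$ is the number of resulting circles; since this expression refers to no ordering of the crossings, $\langle L\rangle$ is independent of the order in which the skein relation is applied.

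For the bracket computations, the Reidemeister I case is direct: resolving the single crossing of a kink via $\langle L^1\rangle=A\langle L^0\rangle+A^{-1}\langle L^\infty\rangle$, one smoothing returns the bare strand and the other returns the strand together with a disjoint circle; applying $\langle\bigcirc\cup L\rangle=(-A^2-A^{-2})\langle L\rangle$ gives $\langle L^1\rangle=-A^3\langle L^0\rangle$ for one curl and $-A^{-3}$ for the oppositely oriented curl. For Reidemeister II I would expand the bracket at both crossings, producing four smoothing states. Three of them (one of which carries a disjoint circle, contributing $-A^2-A^{-2}$) reduce to the same ``wrong'' reconnection of the strands, and their weights sum to zero, since $A^2+A^{-2}+(-A^2-A^{-2})=0$; the remaining state, of weight $A\cdot A^{-1}=1$, is exactly the pair of uncrossed strands. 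Hence $\langle L\rangle$ is invariant under RII.

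Reidemeister III then follows formally: applying the skein relation to the strand being slid across the crossing expresses both sides of the RIII move as the same combination of two diagrams, each pair of which is related by an RII move, so RII-invariance of $\langle L\rangle$ forces RIII-invariance. Finally I would combine these facts with the writhe. Since $w(L)$ is unchanged under RII and RIII and changes by exactly $\pm1$ under a positive/negative RI move, the prefactor is multiplied by $(-A^3)^{\mp1}$, which cancels the $-A^{\pm3}$ acquired by the bracket; explicitly, if $L^1$ has a positive kink then $w(L^1)=w(L^0)+1$ and $K(L^1)=(-A^3)^{-w(L^1)}\langle L^1\rangle=(-A^3)^{-(w(L^0)+1)}(-A^3)\langle L^0\rangle=K(L^0)$, and similarly for the negative kink. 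Thus $K(L)=(-A^3)^{-w(L)}\langle L\rangle$ is invariant under all three moves and is a link invariant, and substituting $t^{-1/4}$ for $A$ shows the same for the Jones polynomial $V(L)$. I expect the main obstacle to be the Reidemeister II computation, as it is the only genuinely multi-term skein cancellation and it is the lemma on which the RIII argument rests; once RII is established, the RI and RIII steps are short.
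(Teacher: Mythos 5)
Your proposal is correct and follows essentially the same route as the paper's source: the paper gives no proof of its own here, deferring entirely to \cite[Thm 2.6]{kauffman_state_1987}, and Kauffman's argument there is precisely yours---well-definedness of $\langle L\rangle$ via the state sum, invariance of the bracket under RII and hence RIII, the $-A^{\pm 3}$ behavior under RI, and cancellation of that factor by the writhe normalization $(-A^3)^{-w(L)}$. There is nothing to correct.
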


For an oriented link diagram, let $L_{+}, L_{-}, L_{0}$ be the resulting link diagrams arising from crossing and smoothing changes on a local region of a specified crossing of the diagram, as in Figure \ref{fig:Conway}.  

\begin{figure}[ht]
    \centering
    \includegraphics[width=0.3\textwidth]{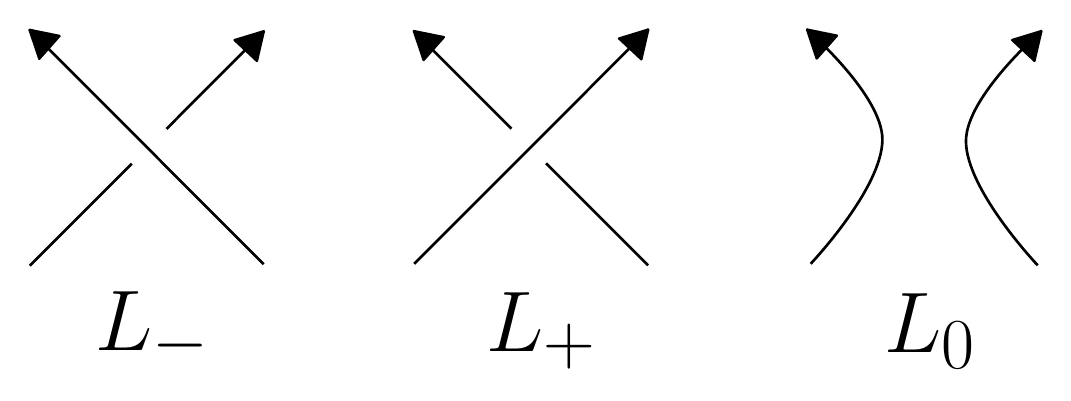}
    \caption{An oriented skein triple.  Each link differs from the link $L$ near a crossing.}
    \label{fig:Conway}
\end{figure}

\begin{defn}\label{ConwayPolynomialDef}
The Conway polynomial of $L$, $\nabla(L)$, is defined by the following skein relations:
    \[\nabla(O) = 1, \text{ where $O$ is any diagram of the unknot, and}\]
    \[\nabla(L_+)-\nabla(L_-) = z\nabla(L_0).\]
\end{defn}
\noindent The Conway polynomial of a link gives the Alexander polynomial via a change of variables.  Thus, the Conway polynomial is a link invariant \cite[Thm 3.4]{kauffman_conway_1981}.

We will be interested in computing the Jones and Conway polynomials of links containing specific tangles.

\begin{figure}[ht]
    \centering
    \includegraphics[width=0.15\textwidth]{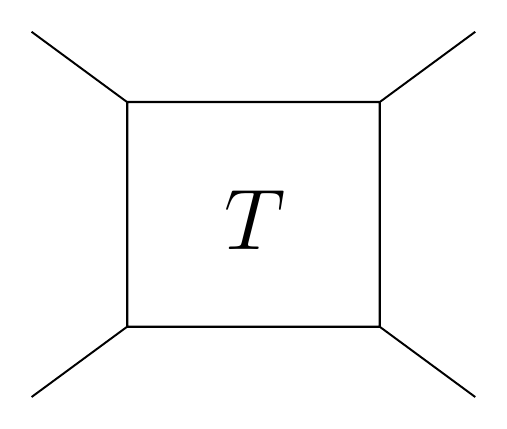}
    \caption{A tangle $T$.}
    \label{fig:tangle}
\end{figure}

\begin{defn}
A (2-string) tangle $T$ is a 3-dimensional ball with two strings properly embedded in it, that is, the endpoints of the strings are fixed on the boundary of a ball.
\end{defn}
\noindent Note that in some situations we may also allow for additional (knotted and linked) components inside the ball.

Choosing the fixed points on the boundary of the ball to be along the equator, one can arrange for the tangle to be in general position with respect to the disk bounded by the equator.  This allows for a diagrammatic representation of such a tangle; see Figure \ref{fig:tangle}. It will be useful to denote certain tangles by integers.  See Figure \ref{fig:integertangles}.

\begin{figure}[ht]
\centering
\captionsetup{justification=centering}
\begin{tabular}{ccccccccccc}
    \includegraphics[width = 0.06\textwidth]{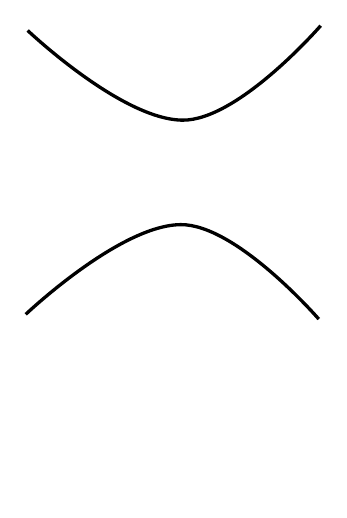} & $\;\;\;\;$ 
    & \includegraphics[width = 0.06\textwidth]{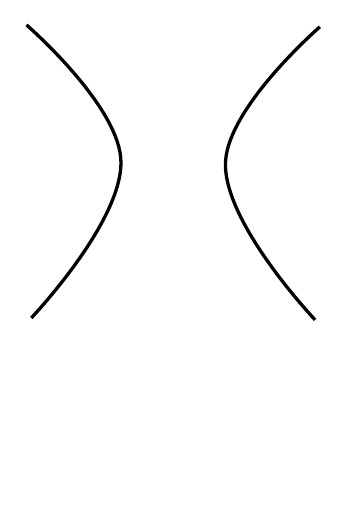} & $\;\;\;\;$ 
    & 
    \includegraphics[width = 0.17\textwidth]{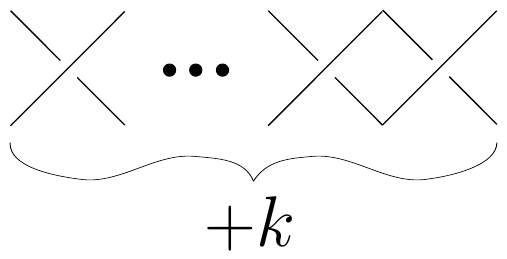} & $\;\;\;\;$ 
    & \includegraphics[width = 0.17\textwidth]{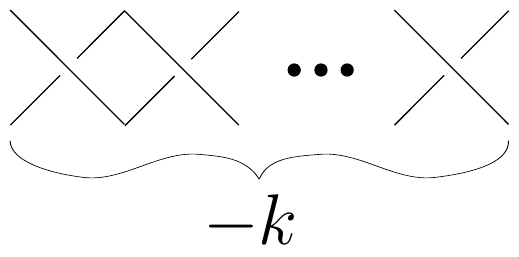}\\
    &&&\\
(a) Tangle $0$ & $\;\;\;$ & (b) Tangle $\infty$ & $\;\;\;$ & (c) Tangle $k$ & $\;\;\;$ & (d) Tangle $-k$ \\
\end{tabular}
\caption{Integer tangles.}
\label{fig:integertangles}
\end{figure}

Given tangles $T$ and $U$, we denote by
\begin{itemize}
    \item $-T$ the reflection of $T$ with respect to the equatorial disk,
    \item $T^\rho$ the reflection of $T$ across the NW-SE axis (see Figure \ref{fig:rho}),
    \item $T^N$ the numerator closure of $T$,
    \item $T^D$ the denominator closure of $T$,
    \item $T+U$ the tangle sum of $T$ with $U$, and
    \item $T*U$ the tangle ``vertical" sum of $T$ with $U$.
\end{itemize}  
See Figure \ref{fig:tangleops} for schematic pictures of the latter four operations. 
\begin{figure}[ht]
\centering
\captionsetup{justification=centering}
\begin{tabular}{ccc}
     \includegraphics[width = 0.09\textwidth]{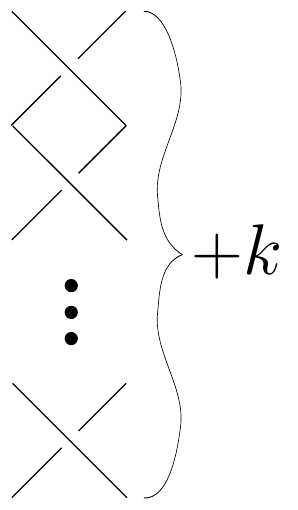} &
    $\;\;\;\;$ 
    & \includegraphics[width = 0.09\textwidth]{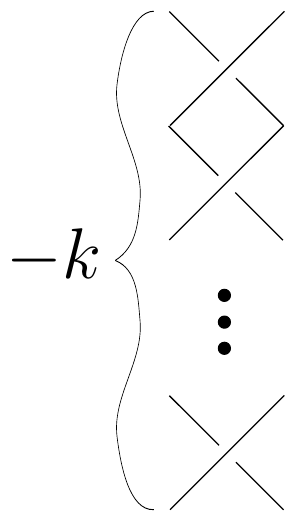}\\
    &&\\
 Tangle $k^\rho$ & $\;\;\;$ &  Tangle $(-k)^\rho$\\
\end{tabular}
\caption{The reflections of tangles $k$ and $-k$ reflected across the NW-SE axis.}
\label{fig:rho}
\end{figure}
\begin{figure}[ht]
\centering
\captionsetup{justification=centering}
\begin{tabular}{ccccccc}
\includegraphics[width = 0.11\textwidth]{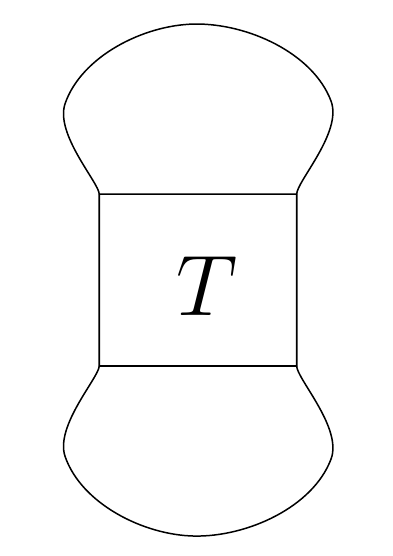} & $\;\;\;\;$ 
    & \includegraphics[width = 0.17\textwidth]{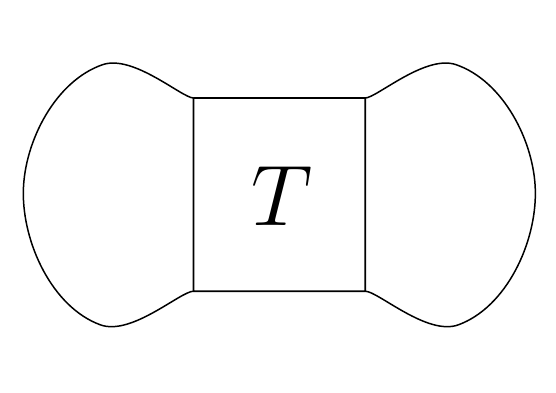} & $\;\;\;\;$ 
    & \includegraphics[trim={0 8mm 0 8mm},width = 0.25\textwidth]{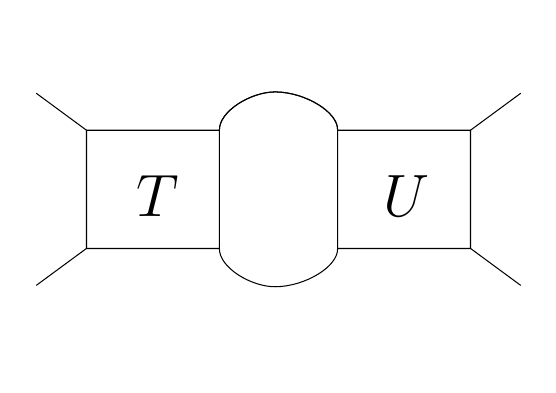} & $\;\;\;\;$ 
    & \includegraphics[trim={8mm 0 8mm 0},width = 0.10\textwidth]{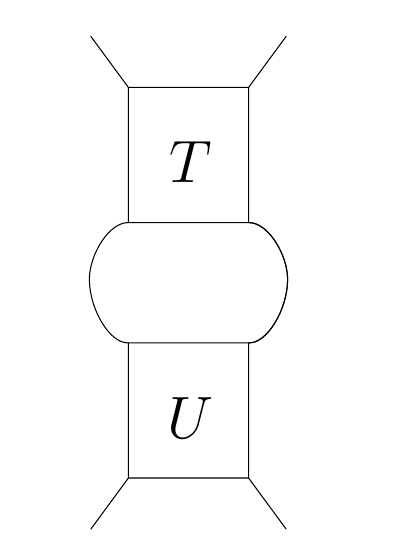} \\
    &&&&&&\\
(a) $T^N$ & $\;\;\;$ &(b) $T^D$ & $\;\;\;$ &(c) $T+U$ & $\;\;\;$ &(d) $T*U$\\
\end{tabular}
\caption{Tangle operations.}
\label{fig:tangleops}
\end{figure}

We denote by $L^T$  a link diagram which, in a disk intersecting the link diagram at four points, contains the tangle diagram $T$.
For example, $T^N$ and $T^D$ are such link diagrams. If $L^T$ is such a link diagram with $T$ contained in some disk $D$, then performing local moves (such as crossing changes and smoothings) in $T$ results in a new tangle diagram $T' \subset D$ and a new link diagram $L^{T'}$ which is unchanged outside of $D$.  With this notation, the bracket polynomial $\langle L^T \rangle$ can be formally expanded to $f(T)\langle L^0\rangle + g(T) \langle L^\infty \rangle$ where $f(T)$ and $g(T)$ are Laurent polynomials in $\mathbb{Z}[A, A^{-1}]$ which depend only on the tangle diagram $T$.

\begin{defn}[\cite{eliahou_infinite_2003}]\label{BracketVectorDef}
Let $T$ be a tangle.  Define the bracket vector of $T$ to be
    \[br(T):= \begin{bmatrix} f(T) & g(T) \end{bmatrix}^t. \]
\end{defn}
\noindent With the notation from the previous paragraph, the definition of $\br(T)$ gives $$\langle L^T \rangle = f(T)\langle L^0\rangle + g(T)\langle L^\infty\rangle = [\langle L^0\rangle\ \langle L^\infty\rangle]\br(T).$$

\begin{prop}[\cite{eliahou_infinite_2003}]\label{BracketVectorSumProp}  Let $T$ and $U$ be tangles.  Then we have the
formula
	\begin{equation*}\label{BracketSum}
	\br(T + U) = \begin{bmatrix}f(U)&0\\g(U)&f(U) + \delta g(U)\end{bmatrix}\br(T),
	\end{equation*}
where $\delta = -A^2-A^{-2}$.
\end{prop}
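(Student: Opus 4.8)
The plan is to exploit the fact, recorded just before Definition \ref{BracketVectorDef}, that the expansion $\langle L^T\rangle = f(T)\langle L^0\rangle + g(T)\langle L^\infty\rangle$ holds for \emph{every} ambient link diagram $L^{\bullet}$ containing the tangle in a prescribed disk. Consequently $\br$ is a well-defined function on tangles valued in $\Z[A,A^{-1}]^2$, it is ``linear'' with respect to the Kauffman skein relation, and it is pinned down on the two basic tangles by $\br(0) = \begin{bmatrix}1 & 0\end{bmatrix}^t$ and $\br(\infty) = \begin{bmatrix}0 & 1\end{bmatrix}^t$, since smoothing the $0$-tangle reproduces $L^0$ and smoothing the $\infty$-tangle reproduces $L^\infty$.

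First I would view the left summand $T$ as a tangle sitting inside the larger tangle $T+U$. Applying the universal expansion to this copy of $T$ replaces it by the $0$- and $\infty$-tangles without disturbing $U$, yielding $\br(T+U) = f(T)\,\br(0+U) + g(T)\,\br(\infty+U)$. I would then repeat the maneuver on the copy of $U$ inside each of $0+U$ and $\infty+U$, reducing the whole computation to the four ``basis'' tangle sums $0+0$, $0+\infty$, $\infty+0$, and $\infty+\infty$; concretely, $\br(0+U) = f(U)\br(0+0) + g(U)\br(0+\infty)$ and $\br(\infty+U) = f(U)\br(\infty+0) + g(U)\br(\infty+\infty)$.

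The heart of the argument is the purely diagrammatic evaluation of these four base cases, obtained by tracing strands through the horizontal connections that define the tangle sum. One checks that $0+0 = 0$, that $0+\infty = \infty$ and $\infty + 0 = \infty$, and, crucially, that $\infty + \infty$ is the $\infty$-tangle together with one extra disjoint closed loop. The first three give $\br(0+0)=\begin{bmatrix}1 & 0\end{bmatrix}^t$ and $\br(0+\infty)=\br(\infty+0)=\begin{bmatrix}0 & 1\end{bmatrix}^t$, while the free loop in the last case contributes a factor of $\delta = -A^2-A^{-2}$ through the relation $\langle \bigcirc \cup L\rangle = \delta \langle L\rangle$, so that $\br(\infty+\infty) = \begin{bmatrix}0 & \delta\end{bmatrix}^t$. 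Substituting these back gives $\br(0+U) = \begin{bmatrix}f(U) & g(U)\end{bmatrix}^t$ and $\br(\infty+U) = \begin{bmatrix}0 & f(U)+\delta g(U)\end{bmatrix}^t$, and assembling these as the columns multiplying $f(T)$ and $g(T)$ produces exactly the asserted matrix.

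I expect the only real obstacle to be bookkeeping rather than anything deep: one must check that substituting a basis tangle for $T$ (resp.\ $U$) inside the sum really does produce $0+U$, $\infty+U$, and so on, with the rest of the diagram untouched, and that the strand-tracing in the $\infty+\infty$ case uses the correct identification of the four boundary points so that the spurious circle is detected. Once the conventions for the $0$- and $\infty$-tangles and for the connection pattern of $+$ are fixed, each of these is a short verification, and the factor $\delta$ in the lower-right entry is precisely the contribution of that extra loop.
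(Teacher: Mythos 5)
Your proof is correct. The paper itself does not prove this proposition---it is quoted from Eliahou--Kauffman--Thistlethwaite without proof---and your argument is precisely the standard one that establishes it: expand $T$ inside $T+U$ against the basis tangles, expand $U$ likewise, and evaluate the four base cases diagrammatically, with the key point being that $\infty+\infty$ is $\infty$ together with a free loop contributing the factor $\delta=-A^2-A^{-2}$ in the lower-right matrix entry. Your base-case evaluations, $\br(0+U)=\begin{bmatrix}f(U)&g(U)\end{bmatrix}^t$ and $\br(\infty+U)=\begin{bmatrix}0&f(U)+\delta g(U)\end{bmatrix}^t$, assemble correctly into the asserted matrix identity.
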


\noindent In Section \ref{sec:conway}, we define a similar invariant for computing the  Conway polynomial.  

\section{A Family of Links With Nontrivial Jones Polynomial}

A 2-component link $(N,K_1\sqcup K_2)$ in the solid torus $N$ can equivalently be thought of as a 3-component link $(S^3,K_1\sqcup K_2\sqcup\mu)$ in $S^3$, where the additional component $\mu$ is a meridian of $N$ \cite[proof of Thm 1]{chernov_khovanov_2020}. In this setting, where we are concerned with skies of causally related events, the meridian $\mu$ may be oriented in either direction, while $K_1$ and $K_2$ must be oriented coherently with a longitude $\lambda$. In addition, we require that each of $K_1$ and $K_2$ be independently isotopic to $\lambda$, so that the linking numbers $\lk(K_1,\mu),\lk(K_2,\mu)$ are either both $1$ or both $-1$. We call such 3-component links \emph{2-sky-like}, and all links we will consider will be of this form.

\begin{rmk}
While all links arising from pairs of skies in our setting are 2-sky-like, it is possible that there are 2-sky-like links that don't arise as pairs of skies.
\end{rmk}

We consider the family of link diagrams $C(T)$ depicted in Figure \ref{fig:C(T)}(a), where $T$ can be any tangle. When the tangle $T$ has a diagram which can be appropriately oriented, we also will consider $C(T)$ with two specific orientations: $C_+(T)$ and $C_-(T)$ as in Figure \ref{fig:C(T)}(b) and (c). This is a natural family to consider since the simplest examples of 2-sky-like links have geometric intersection number two with the meridian. Note that if $C(T)$ is 2-sky-like, then the 2-component link where $\mu$ is ignored is the numerator closure of $T$.

\begin{figure}[ht]
\centering
\captionsetup{justification=centering}
\begin{tabular}{ccccc}
\includegraphics[width = 0.2\textwidth]{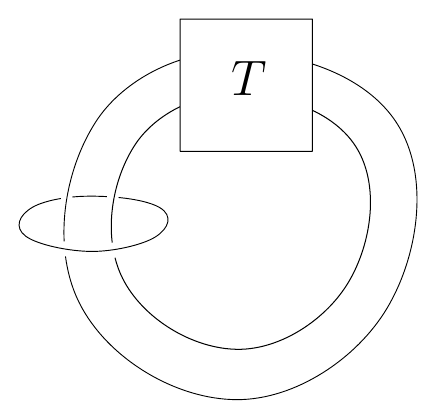} & $\;\;\;\;$ 
    & \includegraphics[width = 0.2\textwidth]{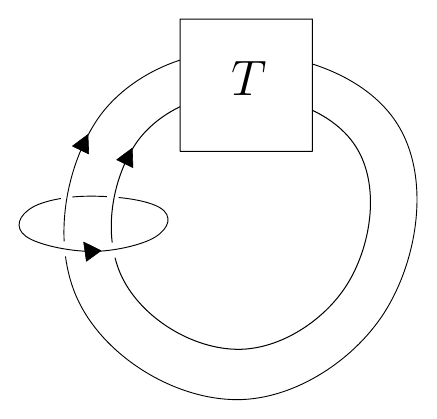} & $\;\;\;\;$ 
    & \includegraphics[width = 0.2\textwidth]{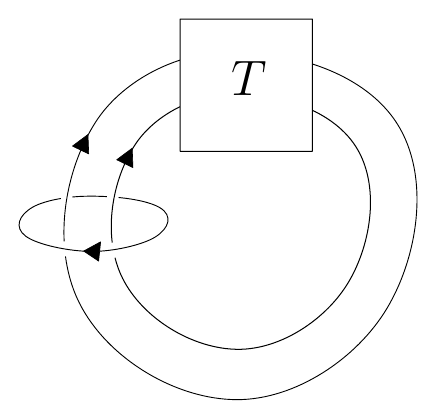} \\
(a) $C(T)$ & $\;\;\;$ &(b) $C_+(T)$ & $\;\;\;$ &(c)  $C_-(T)$ \\
\end{tabular}
\caption{The unoriented three-component link $C(T)$ and two specific orientations $C_+(T)$ and $C_-(T)$.}
\label{fig:C(T)}
\end{figure}

Let $H$ be the connected sum of two Hopf links, i.e. $C(0)$.
With the orientations as in $C_+(0)$ and $C_-(0)$, $H$ is 2-sky-like.  In fact, $H$ is a pair of skies in the following sense. Let $X$ is a $(2+1)$-dimensional globally hyperbolic spacetime with Cauchy surface homeomorphic to $\R^2$. The solid torus $N$ is the space of future-directed null geodesics (light rays) in $X$, and is homeomorphic to a solid torus. Then $H$ corresponds to an unlink of two longitudinal loops in the solid torus $N$, which corresponds to the skies of a pair of causality unrelated points in $X$.

\newpage

Using Definition \ref{BracketDef}, we calculate the bracket vector of $C(T)$:

\begin{prop}\label{C(T)NumDenom}
    \[ \langle C(T) \rangle =
        (-A^6-A^{-6})\langle T^N\rangle + (-A^4-A^{-4}+2)\langle T^D\rangle. \]
\begin{proof}
Apply Definition \ref{BracketDef} iteratively to each crossing in the diagram of $C(T)$.
\end{proof}
\end{prop}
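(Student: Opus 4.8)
The plan is to avoid resolving every crossing of $C(T)$ by hand and instead exploit the linearity already packaged into the bracket-vector formalism. Since $C$ is a diagrammatic operator inserting the tangle $T$ into a fixed surrounding diagram, the expansion recorded just after Definition \ref{BracketVectorDef} applies verbatim with $L^T = C(T)$, giving
\[ \langle C(T)\rangle = f(T)\langle C(0)\rangle + g(T)\langle C(\infty)\rangle, \]
where $C(0)$ and $C(\infty)$ denote $C(T)$ with $T$ replaced by the $0$- and $\infty$-tangles. The crucial point is that $f(T),g(T)$ depend only on $T$, not on the surrounding diagram, so the same pair appears when we close $T$ up. Thus the whole proposition reduces to (i) computing the two scalar brackets $\langle C(0)\rangle$ and $\langle C(\infty)\rangle$, and (ii) rewriting $f(T),g(T)$ in terms of $\langle T^N\rangle$ and $\langle T^D\rangle$.

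For the base cases I would identify both links explicitly. By hypothesis $C(0)=H$ is the connected sum of two Hopf links; using multiplicativity of the Kauffman bracket under connected sum together with $\langle \text{Hopf}\rangle = -A^4-A^{-4}$ gives $\langle C(0)\rangle = (-A^4-A^{-4})^2 = A^8+2+A^{-8}$. For $C(\infty)$ I would read off from Figure \ref{fig:C(T)} that replacing $T$ by the vertical ($\infty$) tangle caps the two strands into a single unknotted circle disjoint from the meridian $\mu$, so $C(\infty)$ is a $2$-component unlink and $\langle C(\infty)\rangle = \delta = -A^2-A^{-2}$.

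To convert to $\langle T^N\rangle,\langle T^D\rangle$ I would compute the four integer-tangle closures, each of which is one or two trivial circles, obtaining $\langle 0^N\rangle=\delta$, $\langle 0^D\rangle=1$, $\langle \infty^N\rangle=1$, $\langle \infty^D\rangle=\delta$, and hence via the same defining expansion
\[ \langle T^N\rangle = \delta f(T) + g(T), \qquad \langle T^D\rangle = f(T) + \delta g(T). \]
It then suffices to substitute these into the claimed right-hand side and check that the coefficient of $f(T)$ collapses to $\langle C(0)\rangle = A^8+2+A^{-8}$ and the coefficient of $g(T)$ collapses to $\langle C(\infty)\rangle = -A^2-A^{-2}$; both are short identities in $\mathbb{Z}[A^{\pm1}]$ using $\delta=-A^2-A^{-2}$. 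By the first displayed expansion, matching these two coefficients proves the formula for every $T$.

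The only genuinely geometric input — and the step I expect to be the main obstacle — is correctly identifying $C(\infty)$ (equivalently, pinning down $\langle C(\infty)\rangle$) from the diagram, since an error there propagates into both coefficients. As a safeguard and independent check I would also run the direct computation hinted at: apply Definition \ref{BracketDef} to the four crossings that $\mu$ makes outside the tangle box, expand into the resulting states, and observe that each fully smoothed state is $T^N$ or $T^D$ together with some trivial circles; grouping the monomial weights $A^{\pm1}$ and circle factors $\delta$ reproduces the same two coefficients. This route needs no prior identification of $C(0)$ or $C(\infty)$, at the cost of more delicate bookkeeping.
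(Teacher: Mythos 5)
Your proof is correct, but it proceeds differently from the paper's. The paper's proof is the direct state-sum route you only sketch as a ``safeguard'' at the end: apply Definition \ref{BracketDef} iteratively to the four crossings that $\mu$ makes with the two strands, observe that each of the resulting states is $T^N$ or $T^D$ together with trivial circles, and collect the monomial and $\delta$ factors to obtain the two coefficients. Your primary argument instead leverages the bracket-vector expansion $\langle C(T)\rangle = f(T)\langle C(0)\rangle + g(T)\langle C(\infty)\rangle$, identifies the two base links ($C(0)=H$ with $\langle H\rangle = (-A^4-A^{-4})^2 = A^8+2+A^{-8}$ via multiplicativity under connected sum, and $C(\infty)$ a two-component unlink with bracket $\delta$), converts via $\langle T^N\rangle = \delta f(T)+g(T)$ and $\langle T^D\rangle = f(T)+\delta g(T)$, and matches coefficients of $f(T)$ and $g(T)$ --- all of which I have checked and which is sound, including the two polynomial identities $(-A^6-A^{-6})\delta + (-A^4-A^{-4}+2) = A^8+2+A^{-8}$ and $(-A^6-A^{-6}) + (-A^4-A^{-4}+2)\delta = -A^2-A^{-2}$. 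The trade-off: the paper's computation is self-contained and derives the coefficients from scratch, at the cost of bookkeeping over $16$ states; yours avoids crossing-by-crossing expansion and reuses machinery the paper develops anyway (and quantities it computes later, in Section 3 and Theorem \ref{JSLinkThm}), but it is a verification of the stated coefficients rather than a derivation, and it hinges on correctly identifying $C(0)$ and $C(\infty)$ from the figure --- a dependence you rightly flag as the main risk. One could tighten your argument into a derivation by inverting the matrix $\begin{bmatrix}\delta & 1\\ 1 & \delta\end{bmatrix}$ to solve for $f(T),g(T)$ in terms of $\langle T^N\rangle,\langle T^D\rangle$, exactly as the paper does in the proof of Theorem \ref{JSLinkThm}.
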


Before we apply Proposition \ref{C(T)NumDenom} to show Theorem \ref{U(n,m)}, we lay out some facts about the Kauffman bracket.

\begin{lem}\label{BracketCalculations}
Let $L$ be a link diagram, and let $L_{t+}$ and $L_{t-}$ be the result of adding a positive kink and a negative kink, respectively, to $L$ via a type I Reidemeister move. Then
\begin{enumerate}
    \item $\langle L_{t+}\rangle = -A^3\langle L\rangle.$
    \item $\langle L_{t-}\rangle = -A^{-3}\langle L\rangle.$
    \item For a positive integer $k$,
        \[ \langle (k^\rho)^D\rangle = \langle k^N\rangle = -A^{-k+2} + \sum_{j=0}^k (-1)^{j+1}A^{4j-k-2}. \]
    \item For a negative integer $-k$,
        \[ \langle ((-k)^\rho)^D\rangle= -A^{k-2} + \sum_{j=0}^k (-1)^{j+1}A^{-4j+k+2}.\]
    \item If $L'$ is another link diagram, then
        \[ \langle L\#L'\rangle = \langle L\rangle\langle L'\rangle. \]
\end{enumerate}
\end{lem}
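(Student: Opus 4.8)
The plan is to address the five identities in order of difficulty, disposing of (1), (2), and (5) quickly and reserving the real work for the twist-region computations in (3) and (4). For (1) and (2) I would apply the skein relation of Definition \ref{BracketDef} at the crossing forming the kink: one of the two smoothings reproduces $L$, while the other produces $L$ together with a disjoint unknotted circle. Evaluating the latter with the rule $\langle \bigcirc \cup L\rangle = (-A^2-A^{-2})\langle L\rangle$ gives $\langle L_{t+}\rangle = \big(A(-A^2-A^{-2}) + A^{-1}\big)\langle L\rangle = -A^3\langle L\rangle$, and the mirror bookkeeping gives (2); the only point to verify is which smoothing carries the extra circle, which is pinned down by the sign of the kink. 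For (5) I would use the state-sum expansion $\langle D\rangle = \sum_s A^{a(s)-b(s)}(-A^2-A^{-2})^{|s|-1}$ over smoothing states $s$, where $|s|$ is the number of loops. The states of $L\# L'$ are exactly the pairs $(s,s')$, and the connecting band of the connected sum fuses one loop of $s$ with one loop of $s'$, so that $|(s,s')| = |s| + |s'| - 1$; substituting and factoring the double sum yields $\langle L\# L'\rangle = \langle L\rangle\langle L'\rangle$.

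For (3) and (4) I would first reduce both statements to a single induction. The reflection $\rho$ interchanges the numerator and denominator closures, so $(k^\rho)^D$ and $k^N$ are the same diagram up to a planar symmetry under which the bracket is unchanged, giving $\langle(k^\rho)^D\rangle = \langle k^N\rangle$; and since $-k$ is the mirror of $k$, the identity in (4) is the image of (3) under $A \mapsto A^{-1}$. Thus it suffices to prove the closed form for $\langle k^N\rangle$, which I would do by induction on $k$. The base case $k=1$ is a one-crossing diagram of the unknot, equal to $-A^{-3}$ by (2), in agreement with the formula. For the step I would resolve a single crossing of the twist region: the $A^{-1}$-smoothing reduces the region to $(k-1)^N$, while the $A$-smoothing short-circuits the two strands so that the remaining $k-1$ crossings become positive Reidemeister-I kinks on an unknot, contributing $A\,(-A^3)^{k-1}$ by (1). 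This gives the clean recursion $\langle k^N\rangle = A^{-1}\langle(k-1)^N\rangle + A(-A^3)^{k-1}$, and unrolling it (summing the resulting geometric series) produces the stated expression $-A^{-k+2} + \sum_{j=0}^k(-1)^{j+1}A^{4j-k-2}$. Equivalently, since $k = 1 + \cdots + 1$, Proposition \ref{BracketVectorSumProp} gives $\br(k) = M^{\,k-1}\br(1)$ for the fixed lower-triangular matrix $M$ with entries $A^{-1}, 0, A, -A^3$, whose powers are immediate from its eigenvalues $A^{-1}$ and $-A^3$, and then $\langle k^N\rangle = f(k)(-A^2-A^{-2}) + g(k)$.

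The main obstacle is the sign-and-index bookkeeping in the step of (3): correctly matching the two smoothings of the chosen crossing to $(k-1)^N$ and to the kinked unknot, and verifying that the kinks are positive, which fixes the factor $-A^3$ rather than $-A^{-3}$. This is where the chirality convention for the positive tangle $k$ is actually used. Once the recursion is in hand, the remaining task is purely the reindexing of the geometric sum $\sum_i(-1)^i A^{4i-k+2}$ against the telescoping of the displayed formula, which I expect to be routine but fiddly rather than conceptually hard.
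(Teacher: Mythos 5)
Your argument is correct, and for parts (1)--(4) it is essentially the paper's own proof: the paper handles (1) and (2) by the same one-crossing skein expansion (citing \cite[Prop 2.5]{kauffman_state_1987}), proves (3) from exactly your recursion $\langle(k^\rho)^D\rangle = A(-A^3)^{k-1} + A^{-1}\langle((k-1)^\rho)^D\rangle$ by induction on $k$, and obtains (4) by the substitution $A\mapsto A^{-1}$ for mirror diagrams. Your transfer-matrix aside is also sound: with the paper's crossing convention for integer tangles one has $f(1)=A^{-1}$ and $g(1)=A$, so Proposition \ref{BracketVectorSumProp} indeed gives $\br(k)=M^{k-1}\br(1)$ with $M=\bigl[\begin{smallmatrix}A^{-1}&0\\ A&-A^3\end{smallmatrix}\bigr]$, matching your stated entries and eigenvalues. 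The one genuine divergence is (5): you prove $\langle L\#L'\rangle=\langle L\rangle\langle L'\rangle$ directly from the state-sum expansion, noting that states of $L\#L'$ are pairs $(s,s')$ and that the connect-sum band fuses one loop from each state, so $|(s,s')|=|s|+|s'|-1$ and the double sum factors; the paper instead quotes $V(L\#L')=V(L)V(L')$ and $w(L\#L')=w(L)+w(L')$ and unwinds Definition \ref{JonesDef}. Your route is more elementary and self-contained, using only the bracket axioms, whereas the paper's is shorter but leans on the cited multiplicativity of the Jones polynomial --- a fact whose standard proof itself passes through the bracket, so your version is arguably the cleaner logical order. One phrase worth repairing: you justify $\langle(k^\rho)^D\rangle=\langle k^N\rangle$ by ``a planar symmetry under which the bracket is unchanged,'' but a planar \emph{reflection} that keeps the over/under data inverts $A\leftrightarrow A^{-1}$ in the bracket; the correct statement is that for twist tangles $(k^\rho)^D$ is the $90^\circ$ \emph{rotation} of the diagram $k^N$ (equivalently, $\rho$ is the $180^\circ$ rotation of the tangle ball about the NW--SE axis), and rotations of the plane do preserve the bracket.
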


\begin{proof}
To see (1) and (2) apply Definition \ref{BracketDef} to $L_{t+}$ and $L_{t-}$, respectively; see, for example, \cite[Prop 2.5]{kauffman_state_1987}. To prove (3), we apply Definition \ref{BracketDef} at any crossing of $k^N$.  Combining this with (1) and (2), we obtain the recursive relation
    \[ \langle (k^\rho)^D\rangle = A(-A^3)^{k-1} + A^{-1}\langle((k-1)^\rho)^D\rangle. \]
Part (3) follows by induction on $k$. Part (4) follows immediately from the fact that the Kauffman bracket of a mirror diagram is obtained by substituting $A^{-1}$ for $A$ 
\cite[Prop 2.7]{kauffman_state_1987}.
Finally, note that that $V(L\#L') = V(L)V(L')$ and
$w(L\#L') = w(L) + w(L')$ for any appropriate orientation on $L$ and $L'$. Then (5) follows easily from Definition \ref{JonesDef}.
\end{proof}

\begin{defn}
A \emph{regular isotopy} between link diagrams is a series of type II or type III Reidemeister moves.
A \emph{balanced isotopy} between link diagrams is a series of type II Reidemeister moves, type III Reidemeister moves, or pairs of type I Reidemeister moves that preserve the writhe of the diagram for any orientation, e.g a positive kink and a negative kink. Two link diagrams are said to be \emph{balanced-isotopic} if they are related by a balanced isotopy.
\end{defn}
\begin{cor}
The bracket polynomial is an invariant of regular-isotopic and balanced-isotopic link diagrams.
\begin{proof}
This is a direct consequence of Definition \ref{JonesDef} and Lemma \ref{BracketCalculations}.
\end{proof}
\end{cor}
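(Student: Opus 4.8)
The plan is to exploit the relation between the Kauffman bracket and the Kauffman polynomial established in Definition \ref{JonesDef}, namely $K(L) = (-A^3)^{-w(L)}\langle L\rangle$, together with the fact (from the preceding theorem) that $K(L)$ is a genuine link invariant. Inverting this relation gives $\langle L\rangle = (-A^3)^{w(L)}K(L)$, which expresses the bracket as a product of two factors whose invariance properties I can control separately.

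First I would observe that both regular isotopy and balanced isotopy preserve the writhe $w(L)$. Regular isotopy consists solely of type II and type III Reidemeister moves, neither of which alters the writhe; balanced isotopy additionally permits type I moves only in writhe-canceling pairs, such as a positive kink together with a negative kink, so by construction $w(L)$ is again unchanged. Since $K(L)$ is invariant under every Reidemeister move, it is in particular unchanged along any regular or balanced isotopy. Feeding both invariances into $\langle L\rangle = (-A^3)^{w(L)}K(L)$ then shows the bracket is unchanged, which is the claim.

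An equivalent and more elementary route uses Lemma \ref{BracketCalculations}(1)--(2) directly: a positive kink multiplies $\langle L\rangle$ by $-A^3$ and a negative kink by $-A^{-3}$, so a balanced pair multiplies it by $(-A^3)(-A^{-3}) = 1$, leaving it fixed. The only point requiring care in this second approach is that it also needs the invariance of the bracket under type II and type III moves, which is not stated verbatim in Lemma \ref{BracketCalculations}. I expect this to be the main (albeit minor) obstacle, and the cleanest way around it is precisely the first argument: deduce type II/III invariance of $\langle L\rangle$ from the invariance of $K(L)$ and the writhe-preservation of those moves, rather than re-deriving it from Definition \ref{BracketDef}.
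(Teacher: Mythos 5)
Your proof is correct and is essentially the paper's own argument: the paper's proof is a one-line citation of Definition \ref{JonesDef} and Lemma \ref{BracketCalculations}, which unpacks exactly to your reasoning --- writhe-preservation under regular and balanced isotopy combined with the invariance of $K(L)$, equivalently the kink formulas of the lemma. Your closing observation, that type II/III invariance of the bracket is most cleanly deduced from the invariance of $K(L)$ rather than re-derived from the bracket axioms, is precisely the role Definition \ref{JonesDef} plays in the paper's citation.
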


We now consider the family of 2-sky-like links $U(n,m) = C(TU(n,m))$ (see Figure \ref{fig:Unm}), where $TU(n,m)$ is as in Figure \ref{fig:TUnm} for integers $n$ and $m$.

\begin{figure}
    \centering
    \includegraphics[width=0.3\textwidth]{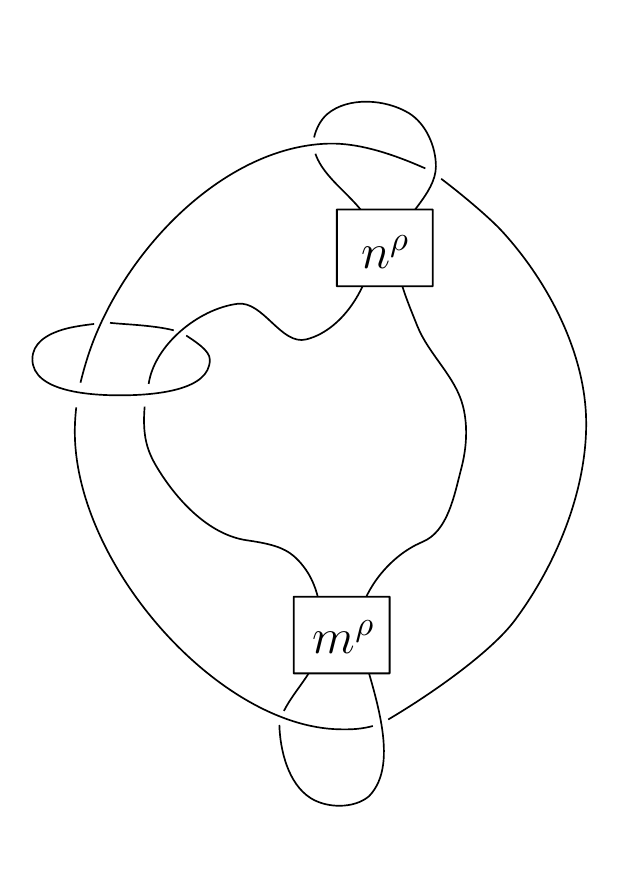}
    \caption{$U(n,m)$.}
    \label{fig:Unm}
\end{figure}

\begin{figure}
    \centering
    \includegraphics[width=0.3\textwidth]{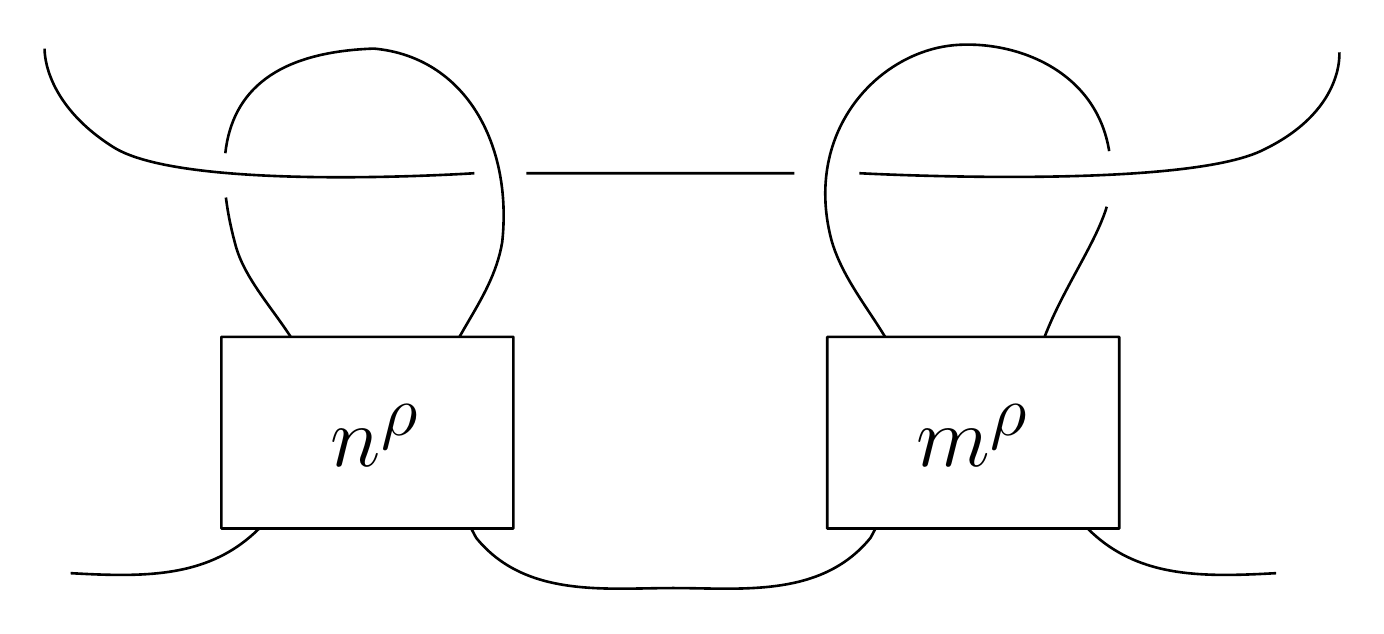}
    \caption{$TU(n,m)$.}
    \label{fig:TUnm}
\end{figure}

\begin{defn}  Let $y(x)$ be a nonzero Laurent polynomial.  Denote by $h(y)$ the highest exponent of $x$ and $\ell(y)$ the lowest exponent of $x$ in $y(x)$.  The span of a nonzero Laurent polynomial $y(x)$, denoted  $\mathrm{span}(y(x))$, is defined to be $h(y)-\ell(y)$.
\end{defn}
Clearly, $\mathrm{span}(y_1y_2) = \mathrm{span}(y_1) + \mathrm{span}(y_2)$. We will use this fact in the next theorem.
\begin{thm}\label{U(n,m)}
For $(n,m) \neq 0$, $V(U(n,m)) \neq V(H)$ for any orientation of $U(n,m)$ and $H.$
\begin{proof}
From Definition \ref{JonesDef}, we have that, for any link diagram $L$,
    \[ \mathrm{span}(V(L)) = \frac{1}{4}\mathrm{span}(\langle L\rangle). \]
The Jones polynomial of the Hopf link is $-t^{5/2}-t^{1/2}$ for some orientation \cite{linkinfo},
so $V(H) = t^5 + 2t^3 + t$ for some orientation.  It follows that $$\mathrm{span}(\langle H \rangle) = 4\cdot\mathrm{span}(V(L)) = 4\cdot 4 = 16.$$
Thus it suffices to show that $\mathrm{span}(\langle U(n,m)\rangle) \neq 16$ for $(n,m)\neq (0,0)$.

The mirror image of $U(n,m)$ is balanced-isotopic to $U(-n-1,-m+1)$, and
by flipping the link across the horizontal axis, we see that $U(n,m)$ is isotopic to $U(m-1,n+1)$ for all integers $n,m$. The composition of flipping and mirroring takes $U(n,m)$ to $U(-m,-n)$.  Thus, it is sufficient to consider $n \ge 0$ and $m \ge -n$.

First, we consider $U(n,-n)$ for $n$ positive. Note that $TU(n,-n)^N$
is regularly isotopic to a split union of two unknots.
By Proposition \ref{C(T)NumDenom}, we have
	\[ \langle U(n,-n)\rangle = (-A^6-A^{-6})(-A^2-A^{-2}) 
		+ (-A^4-A^{-4}+2)\langle TU(n,-n)^D\rangle. \]
Applying \cite[Theorem 5.7]{natario_linking_2004} and Lemma \ref{BracketCalculations}, we see that
    \begin{align*}
        \langle TU(n,-n)^D\rangle &= \left[A^{-n-6} + (A^4+A^{-4})\sum_{k=1}^n(-1)^kA^{4k-n-2}\right] \\
        & \;\;\;\;\;	\times\left[A^{n+6} + (A^4+A^{-4})\sum_{k=1}^n(-1)^kA^{-4k+n+2}\right]. 
    \end{align*}
Therefore, we have
\begin{align*}
    h(\langle TU(n,-n)^D\rangle) &= ((4n-n
    -2)+4)+(n+6) = 4n+8,\\
    \ell(\langle TU(n,-n)^D\rangle) &= ((-4n+n+2)-4)+(-n-6) =  -4n-8.
\end{align*}
Thus
\begin{align*}
    h(\langle U(n,-n)\rangle) &= \max\{8, (4n+8)+4\} = 4n+12,\\
    \ell(\langle U(n,-n)\rangle) &= \min\{-8, (-4n-8)-4\} = -4n-12,
\end{align*}
and so $\mathrm{span}(\langle U(n,-n)\rangle) = 4n+12-(-4n-12) \ge 24 > 16$.
    

\begin{figure}
    \centering
    \includegraphics[width=0.8\textwidth]{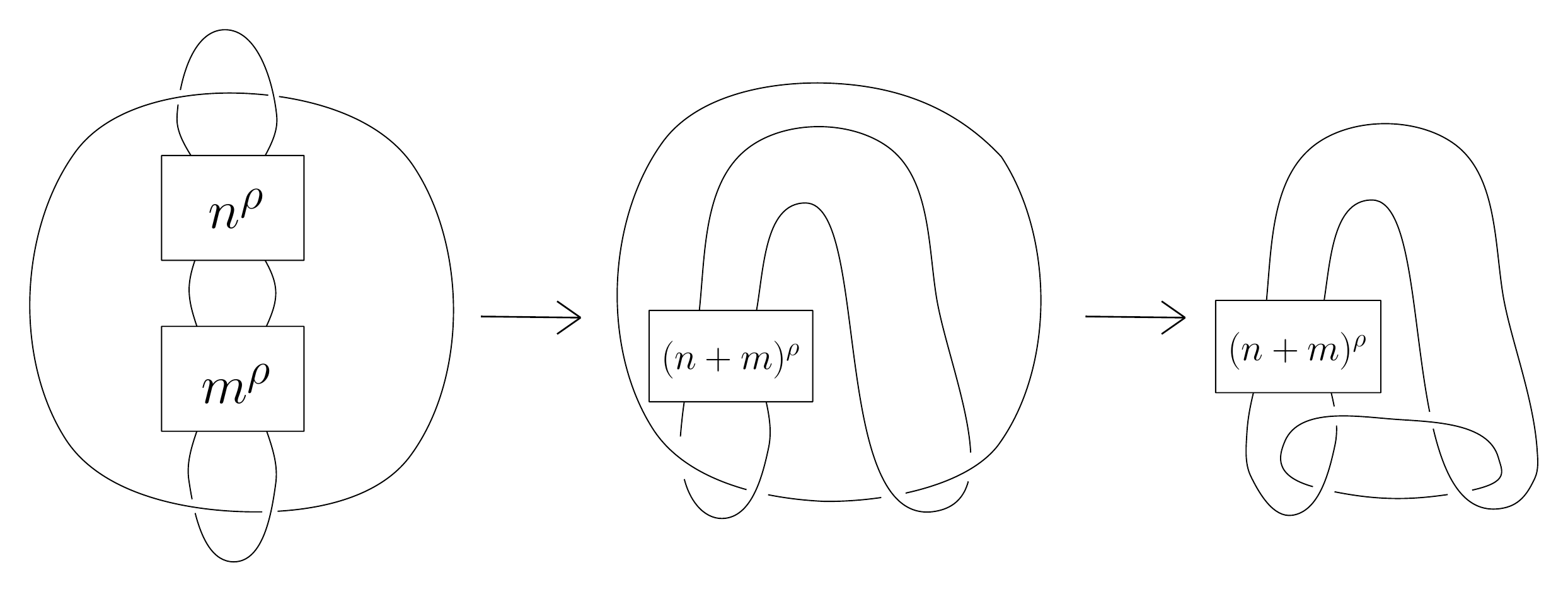}
    \caption{A regular isotopy.}
    \label{fig:regisotopy}
\end{figure}

Next, we consider $U(n,-m)$, with $0\le m < n$. In this case, $TU(n,-m)^N$ is no longer a trivial link. However, observe that after performing the regular isotopy shown in  Figure \ref{fig:regisotopy} we obtain a link which is balanced isotopic to $C((n+m)^\rho)$. Therefore, we can apply Proposition \ref{C(T)NumDenom}.
Using this and Lemma \ref{BracketCalculations}, we see that 
	\begin{align*} 
	\langle TU(n,-m)^N\rangle &= (-A^6-A^{-6})\langle (n-m)^{\rho N}\rangle
		+ (-A^4-A^{-4}+2)\langle(n-m)^{\rho D}\rangle \\
	&= (-A^6-A^{-6})(-A^3)^{n-m} \\&\quad+ (-A^4-A^{-4}+2)\left(-A^{-(n-m)+2} 
		+ \sum_{k=0}^{n-m} (-1)^{k+1}A^{4k-(n-m)-2}\right).
	\end{align*}
Applying \cite[Theorem 5.7]{natario_linking_2004} and Lemma \ref{BracketCalculations}, we calculate that
	\[ \langle TU(n,-m)^D\rangle 
		= \left[A^{-n-6} + (A^4+A^{-4})\sum_{k=1}^n(-1)^kA^{4k-n-2}\right] \]
	\[ \times \left[A^{m+6} + (A^4+A^{-4})\sum_{k=1}^m(-1)^kA^{-4k+m+2}\right]. \]
(Here, the empty sum when $m = 0$ is taken to be 0.)
This allows for a full calculation of $\langle U(n,-m)\rangle$, as Proposition \ref{C(T)NumDenom} gives
	\[ \langle U(n,-m)\rangle = (-A^6-A^{-6})\langle TU(n,-m)^N\rangle 
		+ (-A^4-A^{-4}+2)\langle TU(n,-m)^D\rangle. \]
After checking that $U(1,0)$ does not have trivial Jones polynomial, we consider the few remaining cases.  We omit many of the computations, as they are very similar to the computation of $\mathrm{span}(\langle U(n,-n)\rangle)$ above.
\begin{itemize}
\item If $m = 0$, and $n > 1$, we compute that 
    \[ \mathrm{span}(\langle U(n,0)\rangle) = (3n+8)-(-n-12) = 4n+20 > 16. \]
    Note that in this case, the highest terms in the two summands cancel.

\item If $n-m = 1$, then $n=m+1$ and $-m < 0$. In this case, there is cancellation in the sum in the formula for $\langle TU(n,-m)^N\rangle$.  We see that
\[ \mathrm{span}(\langle U(m+1,-m)\rangle) = (4m+15)-(-4m-13) = 8m+28 > 16. \]
\item If $n-m > 1$ and $-m < 0$, we compute that
\[ \mathrm{span}(\langle U(n,-m)\rangle) = (3n+m+8)-(-n-3m-8) = 4n+4m+16 > 16. \]
\end{itemize}

Finally, we consider the case of $U(n,m)$ with $n\ge 0$ and $m > 0$. We can always arrange that $n < m$, for if $n \ge m$ we can instead consider $U(m-1,n+1)$ (obtained by flipping $U(n,m)$ over) and we have $m-1 <n+1$. Because $U(0,1)$ is isotopic to the link L8n3 in \cite{linkinfo}, which has Jones polynomial not equal to that of $H$, we can further restrict to $m \ge 2$. Now
	\begin{align*} 
	\langle TU(n,m)^N\rangle &= (-A^6-A^{-6})\langle(n+m)^{\rho N}\rangle
		+ (-A^4-A^{-4}+2)\langle(n+m)^{\rho D}\rangle \\
	&= (-A^6-A^{-6})(-A^3)^{n+m} \\&\quad+ (-A^4-A^{-4}+2)\left(-A^{-(n+m)+2} 
		+ \sum_{k=0}^{n+m} (-1)^{k+1}A^{4k-(n+m)-2}\right)
	\end{align*}
and
	\begin{align*}
	 \langle TU(n,m)^D\rangle &= 
		\left[A^{-n-6} + (A^4+A^{-4})\sum_{k=1}^n(-1)^kA^{4k-n-2}\right] \\
	&\quad \; \times\left[A^{-m-5} + (A^4+A^{-4})\sum_{k=1}^{m-1}(-1)^kA^{4k-m-1}\right].  
	\end{align*}
First, the case where $n = 0$. Then we have
    \[ h(\langle TU(0,m)^N\rangle) = 3m+6 \text{ and } \ell(\langle TU(0,m)^N\rangle) = -m-6,\] 
    \[ h(\langle TU(0,m)^D\rangle) = 3m-7 \text{ and } \ell(\langle TU(0,m)^D\rangle) = -m-11,\] 
and so
    \begin{align*}
    \mathrm{span}(\langle U(0,m)\rangle) &= \max\{(3m+6)+6, (3m-7)+4\}-\min\{(-m-6)-6, (-m-11)-4\}\\
    &= (3m+12) - (-m-15) = 4m+27>16.
    \end{align*}
Now we consider the last remaining case, 
$m > n > 0$:
    \[ h(\langle TU(n,m)^N\rangle) = 3(n+m)+6 \text{ and } \ell(\langle TU(n,m)^N\rangle) = -(n+m)-6,\] 
    \[ h(\langle TU(n,m)^D\rangle) = 3(n+m)+1 \text{ and } \ell(\langle TU(n,m)^D\rangle) = -(n+m)-11,\] 
and so
    \begin{align*}
    \mathrm{span}(\langle U(n,m)\rangle) &= \max\{(3(n+m)+6)+6, (3(n+m)+1)+4\}\\ &\quad-\min\{(-(n+m)-6)-6, (-(n+m)-11)-4\}\\
    &= (3(n+m)+12) - (-(n+m)-15) = 4(n+m)+27>16.
    \end{align*}
Thus $U(n,m)$ doesn't give trivial Jones polynomial for any integers $(n,m)\neq(0,0)$.
\end{proof}
\end{thm}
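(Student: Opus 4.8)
The plan is to compare the two links through the \emph{span} of the Kauffman bracket, which has the decisive advantage of being independent of orientation: the bracket is defined on unoriented diagrams, and the writhe correction in Definition \ref{JonesDef} only multiplies by a power of $A$, hence shifts the Laurent polynomial without changing its span. Thus $\mathrm{span}(V(L)) = \tfrac14\mathrm{span}(\langle L\rangle)$, and it suffices to prove the single inequality $\mathrm{span}(\langle U(n,m)\rangle)\neq \mathrm{span}(\langle H\rangle)$ for all $(n,m)\neq(0,0)$; this one statement rules out $V(U(n,m))=V(H)$ for every choice of orientation on both links at once. Reading $V(H)=t^5+2t^3+t$ gives $\mathrm{span}(V(H))=4$, so the target is $\mathrm{span}(\langle U(n,m)\rangle)\neq 16$.

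Before computing, I would shrink the parameter range using symmetries of the family. Mirroring the diagram conjugates the bracket by $A\mapsto A^{-1}$ and so preserves span; combined with a balanced isotopy it sends $U(n,m)$ to $U(-n-1,-m+1)$. Flipping the link across the horizontal axis is an honest isotopy identifying $U(n,m)$ with $U(m-1,n+1)$, and composing the two operations shows $U(n,m)$ and $U(-m,-n)$ have equal bracket span. These three moves let me assume $n\ge 0$ and $m\ge -n$.

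The computational core is Proposition \ref{C(T)NumDenom}, which expresses $\langle U(n,m)\rangle$ as an explicit combination of the numerator and denominator closures $\langle TU(n,m)^N\rangle$ and $\langle TU(n,m)^D\rangle$ with coefficients $-A^6-A^{-6}$ and $-A^4-A^{-4}+2$. I would evaluate these closures in closed form: after a regular isotopy of $TU(n,m)$ (cf. Figure \ref{fig:regisotopy}) the two twist regions become standard, so Lemma \ref{BracketCalculations}(3)--(4) together with \cite[Thm 5.7]{natario_linking_2004} apply, and the denominator closure in particular factors as a product of two Laurent polynomials whose extreme exponents add. From these one extracts $h$ and $\ell$ of each closure, multiplies by the coefficient polynomials, and takes the max and min over the two summands to obtain $h(\langle U(n,m)\rangle)$ and $\ell(\langle U(n,m)\rangle)$, and hence the span.

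The real work, and the main obstacle, lies in the case analysis needed to identify the true leading and trailing terms, since the two summands may share an extreme exponent whose coefficients cancel. I would split along the sign and relative size of $n$ and $m$: the diagonal $m=-n$ (where $TU(n,-n)^N$ is regularly isotopic to a two-component unlink, forcing span $\ge 24$); the mixed cases $0\le m<n$, subdivided by whether $n-m=1$ (cancellation in the sum for $\langle TU(n,-m)^N\rangle$) or $m=0$ (cancellation between the two summands' top terms); and finally $m>n>0$. In every branch the resulting span is strictly larger than $16$ once the genuinely small links $U(1,0)$ and $U(0,1)$---the latter being L8n3---are checked directly against $V(H)$. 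The delicate point is predicting these cancellations correctly: a single miscounted leading term would give the wrong span, so I would verify the boundary cases $n-m=1$, $m=0$, and $n=0$ by hand rather than trusting the generic formula.
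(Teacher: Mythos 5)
Your proposal follows essentially the same route as the paper's proof: reduce to the orientation-independent span of the Kauffman bracket (target $\neq 16$), use the mirror/flip symmetries to restrict to $n \ge 0$, $m \ge -n$, expand via Proposition \ref{C(T)NumDenom} with the closed forms from Lemma \ref{BracketCalculations} and \cite[Thm 5.7]{natario_linking_2004}, and run the same case analysis (diagonal $m=-n$, the subcases $m=0$, $n-m=1$, $n-m>1$, then $m>n>0$) with direct checks of $U(1,0)$ and $U(0,1)=$ L8n3. The cancellation issues you flag in the boundary cases are exactly the ones the paper handles, so your plan is correct and matches the paper's argument.
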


Proposition 1.1 follows from Theorem \ref{U(n,m)}. Namely, consider $U(n,-n)$. The corresponding 2-component link $TU(n,-n)^N$ is evidently an unlink. On the other hand, $U(n,-n)$ and the connected sum of two Hopf links $H$ have different Jones polynomials for any orientation of the components by Theorem \ref{U(n,m)}. Nat\'ario and Tod \cite[Cor 5.12]{natario_linking_2004} had already found an infinite family of links with nontrivial Jones polynomial, but did not consider links in the solid torus that were unlinked when embedded into $\R^3$.

Can the Jones polynomial detect causality in the given setting? In other words, are all 2-sky-like links distinguishable from the connected sum of two Hopf links $H$ by their Jones polynomials? To answer this question, it is necessary to ask if any non-trivial 2-sky-like link has the same bracket polynomial as $H=C(0)$ up to multiplication by a power of $A$. We first check LinkInfo \cite{linkinfo} and find no 3-component links up to 11 crossings with the same Jones polynomial as the connected sum of two Hopf links. Next, we try to find properties of a link with this Jones polynomial. For simplicity, we still consider the family $C(T)$. One can calculate, using Definition \ref{BracketDef}, that
    \[ \langle C(0)\rangle = A^8+2+A^{-8}\qquad\text{and}\qquad
        \langle C(\infty)\rangle = -A^2-A^{-2}. \]
Then by Definition \ref{BracketVectorDef}, for any tangle $T$,
    \[ \langle C(T)\rangle = \begin{bmatrix}
        A^8+2+A^{-8}&-A^2-A^{-2}\end{bmatrix}\br(T). \]
If, for some orientation of $C(T)$, $V(C(T)) = V(C(0))$, then
$\langle C(T)\rangle = (-A^3)^n\langle C(0)\rangle$ for some integer $n$. 
One way we could hope to get this is by finding $T$ such that $\br(T) = (-A^3)^n\br(0) = [(-A^3)^n\ 0]^t$. However, we have the following theorem:

\begin{thm}\cite[Thm 13]{sikora2020tangle}\label{JonesConjThm}
There exists a nontrivial knot with Jones polynomial 1 if and only if there exists a tangle $T \neq 0$ such that $\br(T)=[rA^n\ 0]^t$ for some integers $r,n\in\Z$.
\end{thm}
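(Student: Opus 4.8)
The plan is to prove the two implications separately. The key reduction, immediate from Definition \ref{JonesDef}, is that for a \emph{knot} $K$ the condition $V(K)=1$ is essentially the statement that the Kauffman bracket $\langle K\rangle$ is a monomial: since $V(K)=(-A^3)^{-w}\langle K\rangle$ (after $A=t^{-1/4}$), a monomial bracket $\langle K\rangle=\pm A^q$ produces a monomial $V(K)=\pm A^{q-3w}$, and because every knot satisfies $V(K)|_{t=1}=1$, this monomial is forced to be a genuine power $t^s$ of $t$ with coefficient $+1$. Thus a nontrivial knot with monomial bracket is only a short step away from one with $V=1$.

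For the forward implication, suppose $J$ is a nontrivial knot with $V(J)=1$, so $\langle J\rangle=\pm A^m$. Let $T$ be the tangle obtained from the $0$-tangle by tying a copy of $J$ into its upper arc, i.e.\ by forming a local connected sum with $J$ along one of the two properly embedded strands. For any link diagram $L^T$ containing $T$, replacing $T$ by the $0$-tangle yields a diagram $L^0$ with $L^T=L^0\# J$, so Lemma \ref{BracketCalculations}(5) gives $\langle L^T\rangle=\langle L^0\rangle\langle J\rangle=\pm A^m\langle L^0\rangle$. Comparing with Definition \ref{BracketVectorDef}, we read off $f(T)=\pm A^m$ and $g(T)=0$, that is $\br(T)=[\pm A^m\ 0]^t$. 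Finally $T\neq 0$, because the local knot type of its upper strand is $J\neq$ unknot, and this is an invariant of the tangle up to isotopy rel boundary.

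For the converse, suppose $T\neq 0$ satisfies $\br(T)=[rA^n\ 0]^t$. Choose the closure $\gamma$ (numerator or denominator) for which $\gamma(0)$ is the crossingless unknot, so that $\langle\gamma(0)\rangle=1$. Because $g(T)=0$, the defining expansion of $\br(T)$ gives $\langle\gamma(T)\rangle=f(T)\langle\gamma(0)\rangle=rA^n$, a monomial. If $\gamma(T)$ is a knot $K$, then by the reduction above $V(K)=t^s$ for some $s\in\Z$ (and $r=\pm1$ is forced), and then $K\#\ol K$ is a nontrivial knot with $V(K\#\ol K)=t^s t^{-s}=1$, as required (if $s=0$ we may take $K$ itself). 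The main obstacle is precisely the hypothesis that ``$\gamma(T)$ is a nontrivial knot'': a tangle can be nontrivial while admitting unknotted closures, so one must transfer $T\neq 0$ into knottedness of a closure. This requires choosing $\gamma$ compatibly with the strand-connection pattern of $T$ so that $\gamma(T)$ is a single component --- otherwise $\gamma(0)$ is a two-component unlink with non-monomial bracket $\delta=-A^2-A^{-2}$, breaking the argument --- and then ruling out the degenerate case in which every admissible closure is unknotted. By contrast, the scalar $r$ and exponent $n$ require no separate effort: they are pinned down automatically by evaluating the resulting knot polynomial at $t=1$.
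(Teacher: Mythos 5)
First, a remark on the comparison itself: the paper contains no proof of this statement --- it is quoted directly from Sikora \cite[Thm 13]{sikora2020tangle} --- so your attempt can only be measured against what a complete proof must contain. Your forward direction meets that standard: tying a nontrivial knot $J$ with $V(J)=1$ into one strand of the $0$-tangle gives $L^T=L^0\# J$ for every ambient diagram $L$, so by Lemma \ref{BracketCalculations}(5) we get $\langle L^T\rangle=\langle J\rangle\langle L^0\rangle=(-A^3)^{w}\langle L^0\rangle$, whence $\br(T)=[(-1)^wA^{3w}\ 0]^t$; and $T\neq 0$ because the denominator closure of $T$ is the knot $J$ rather than the unknot. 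That half is essentially complete.

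The converse, however, contains a genuine gap, which you name (``the main obstacle'') but do not close, and it is precisely the mathematical content of the theorem. Two things are left open. (i) Connectivity: if the strands of $T$ join NW--SW and NE--SE (an $\infty$-like tangle), then $T^D$ is a two-component link and the knot facts you invoke ($V_K(1)=1$, so a monomial bracket forces $V_K=t^s$) do not apply. This case can in fact be excluded elementarily: $g(T)=0$ gives $\langle T^N\rangle=\delta\langle T^D\rangle$, and evaluating $V_L(1)=(-2)^{\mu(L)-1}$ on both closures forces $\mu(T^N)=\mu(T^D)+1$, which only the $0$-like connection pattern satisfies --- but no such argument appears in your proposal. (ii) The degenerate case: you must show that $T\neq 0$ together with $\br(T)=[rA^n\ 0]^t$ yields a closure that is a \emph{nontrivial} knot, and the hypothesis $T\neq 0$ alone can never do this. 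Every nonzero integer tangle $k$ is a nontrivial tangle whose denominator closure is the unknot, so knottedness of $T^D$ has to be extracted from the bracket condition $g(T)=0$ itself (integer tangles are excluded only because $g(k)\neq 0$ for $k\neq 0$). Proving that a tangle with $g(T)=0$, $f(T)$ a monomial, and unknotted denominator closure must equal the $0$-tangle is essentially as hard as the theorem; in Sikora's treatment it requires substantial input (passing to double branched covers and using results of Gordon--Luecke/Lickorish type to force such a tangle to be rational, then checking that the only rational tangle with vanishing second bracket coordinate is $0$). As written, your argument establishes only the strictly weaker statement: \emph{if} some tangle $T\neq 0$ with $\br(T)=[rA^n\ 0]^t$ has a knotted denominator closure, then a nontrivial knot with trivial Jones polynomial exists. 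Flagging the missing implication as an obstacle does not discharge it.
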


\noindent It remains an open problem whether there exists a nontrivial knot with trivial Jones polynomial \cite{dasbach_does_1997}. We also do not know that $\langle C(T)\rangle = (-A^3)^n\langle C(0)\rangle$ necessarily implies $\br(T)=(-A^3)^n\br(0)$. There are certainly many vectors $V\in\Z[A,A^{-1}]^2$ that satisfy
    \[ \begin{bmatrix}
        A^8+2+A^{-8}&-A^2-A^{-2}\end{bmatrix}V = A^8+2+A^{-8}, \]
but we do not know if these vectors arise as bracket vectors of tangles. We pose this as an open question:

\begin{question}\label{BracketVectorQuestion}
Does
    \[ \begin{bmatrix}
        A^8+2+A^{-8}&-A^2-A^{-2}\end{bmatrix}\br(T) = \br(0) = [1\ 0]^t \]
imply $\br(T)=\br(0)$?
\end{question}

If Question \ref{BracketVectorQuestion} is answered in the positive, then whether or not the Jones polynomial detects causality in our setting is tied to whether or not the Jones polynomial detects the unknot by Theorem \ref{JonesConjThm}. If Question \ref{BracketVectorQuestion} is answered in the negative, then there must exist some tangle $T$ that provides a counterexample, and this tangle will necessarily be nontrivial, but not necessarily 2-sky-like.  Our next step would be to check if it is 2-sky-like, and then whether it appears as a pair of skies. Since there is no clear answer either way, we pose the following conjecture:

\begin{conj}[Conjecture 1.1]
Let $X$ be a $(2+1)$-dimensional globally hyperbolic spacetime with Cauchy surface $\Sigma$ not homeomorphic to $S^2$ or $\R P^2$. Then the Jones polynomial detects causality between any two events in $X$.
\end{conj}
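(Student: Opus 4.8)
The plan is to reduce the conjecture to a purely combinatorial statement about link diagrams in a solid torus, and then to a tangle-theoretic problem that I can attack with the bracket-vector machinery of Section~\ref{sec:background}. By the universal-cover argument recalled in the introduction it suffices to treat $\Sigma$ homeomorphic to $\R^2$, and by the Low conjecture \cite[Thm B]{chernov_legendrian_2010} two events are causally related if and only if their skies are linked in $N\cong S^1\times\R^2$. Passing to the three-component picture $(S^3,K_1\sqcup K_2\sqcup\mu)$, causally unrelated events correspond exactly to the unlinked configuration, namely the connected sum of two Hopf links $H=C(0)$. Thus the conjecture is equivalent to the statement that every nontrivial $2$-sky-like link is distinguished from $H$ by its Jones polynomial, and this is what I would aim to establish.

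First I would dispose of the simplest $2$-sky-like links, those with geometric intersection number two with the meridian, which are exactly the links $C(T)$. For these, Proposition~\ref{C(T)NumDenom} together with the computations $\langle C(0)\rangle=A^8+2+A^{-8}$ and $\langle C(\infty)\rangle=-A^2-A^{-2}$ yields $\langle C(T)\rangle=[\,A^8+2+A^{-8}\ \ -A^2-A^{-2}\,]\,\br(T)$. Since the Jones polynomial is the writhe-normalized bracket (Definition~\ref{JonesDef}), the substitution is invertible, so $V(C(T))=V(H)$ forces $\langle C(T)\rangle=(-A^3)^{\,n}\langle C(0)\rangle$ with $n=w(C(T))-w(H)$. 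The goal is then to show that this scalar identity can hold only for the trivial tangle.

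The crux is to answer Question~\ref{BracketVectorQuestion} affirmatively: I would try to prove that $[\,A^8+2+A^{-8}\ \ -A^2-A^{-2}\,]\,\br(T)=(-A^3)^{\,n}\langle C(0)\rangle$ forces $\br(T)=(-A^3)^{\,n}\br(0)=[\,(-A^3)^{\,n}\ \ 0\,]^t$. One line of attack is to study the image lattice of $\br$ over all tangle diagrams, using the sum formula of Proposition~\ref{BracketVectorSumProp} (and its vertical-sum analogue) to constrain which vectors in $\Z[A,A^{-1}]^2$ can occur, and thereby to rule out the spurious solutions of the scalar equation that are not of the form $[\,rA^n\ \ 0\,]^t$. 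Once this is settled, Theorem~\ref{JonesConjThm} \cite[Thm 13]{sikora2020tangle} converts the surviving possibility $\br(T)=[\,rA^n\ \ 0\,]^t$ with $T\neq 0$ into the existence of a nontrivial knot with trivial Jones polynomial; a resolution of the Jones unknot-detection problem \cite{dasbach_does_1997} would then force $T=0$, i.e.\ $C(T)=H$.

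The hardest part is exactly this last link in the chain: both Question~\ref{BracketVectorQuestion} and the Jones unknot-detection problem are open, so the argument as sketched is necessarily conditional. A further obstacle, orthogonal to these, is that the family $C(T)$ only captures $2$-sky-like links meeting the meridian twice; a complete proof must also control links of higher geometric intersection number with $\mu$. I would handle these by iterating the tangle decomposition, expressing such a link as a closure built from several tangle slots arranged around the solid torus, and extending the bracket-vector calculus to a transfer-matrix computation along the longitude that reduces each case again to a span or leading-coefficient estimate of the kind carried out in the proof of Theorem~\ref{U(n,m)}. Whether the resulting infinite family of scalar identities can all be resolved uniformly, and ideally without assuming the Jones unknot conjecture, is the main theoretical risk in the program.
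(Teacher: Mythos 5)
The statement you are addressing is a \emph{conjecture}: the paper itself offers no proof, only a reduction and a body of evidence (Theorem~\ref{U(n,m)}, the LinkInfo search, and the discussion surrounding Question~\ref{BracketVectorQuestion}). Your proposal in fact retraces the paper's own reduction chain almost exactly --- universal cover to get $\Sigma\cong\R^2$, the Low conjecture to translate causality into linking, the $2$-sky-like formulation, the family $C(T)$, the scalar equation $[\,A^8+2+A^{-8}\ \ -A^2-A^{-2}\,]\br(T)=(-A^3)^n\langle C(0)\rangle$, and finally Theorem~\ref{JonesConjThm} to connect the surviving case to the Jones unknot-detection problem. So as a description of the landscape your write-up is accurate; but it is not a proof, and the gaps are exactly the ones the paper flags as open.

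Concretely, three independent holes remain. First, your ``line of attack'' on Question~\ref{BracketVectorQuestion} (constraining the image of $\br$ via the sum formula of Proposition~\ref{BracketVectorSumProp}) is a research direction, not an argument: nothing in the paper or in your sketch rules out bracket vectors $V\neq[\,rA^n\ \ 0\,]^t$ solving the scalar equation, and the paper explicitly notes that many such vectors exist in $\Z[A,A^{-1}]^2$ --- the issue is whether they are realized by tangles. Second, even granting an affirmative answer there, Theorem~\ref{JonesConjThm} only converts the problem into the Jones unknot conjecture, which is open \cite{dasbach_does_1997}; your chain is therefore doubly conditional. Third, the restriction to $C(T)$ is a real loss of generality: the paper's remark and Figure~\ref{fig:counterexample} exhibit a $2$-sky-like link apparently not of the form $C(T)$ (equivalently, with no meridional disk meeting each component once), and your proposed ``transfer-matrix along the longitude'' extension is again only a plan, with no analogue of Sikora's theorem available for multi-slot closures. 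To your credit you acknowledge all of this; but the honest conclusion is that your proposal reproduces the paper's motivation for posing the conjecture rather than supplying a proof of it.
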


\begin{rmk}
Not all 2-sky-like links are of the form $C(T)$. For a 2-sky-like link to be $C(T)$ for some tangle $T$, it is necessary and sufficient that the corresponding 2-component link in the solid torus has some meridional disk that meets each component exactly once. An example of a 2-sky-like link that does not seem to satisfy this condition is given in Figure \ref{fig:counterexample}.
\end{rmk}

\begin{figure}
    \centering
    \includegraphics[width=3in]{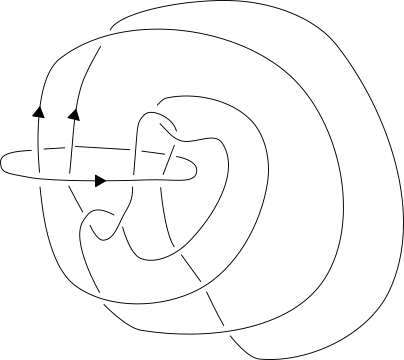}
    \caption{A 2-sky-like link not of the form $C(T)$.}
    \label{fig:counterexample}
\end{figure}

\section{Tangle Invariants Related to the Conway Polynomial} \label{sec:conway}

We now investigate the Conway polynomial and causality by noting some tangle invariants. Of particular importance is the way that tangle invariants are related to certain link invariants. We first distinguish two classes of tangles.
\begin{defn}\label{defn:oriented tangle}
An oriented tangle $T$ is \emph{left-right oriented} if its ends are oriented as in Figure \ref{fig:tangleori}(a). 
If $T$ is oriented as in Figure \ref{fig:tangleori}(b) or (c), we say that $T$ is \emph{diagonally oriented}.
\end{defn}
\begin{figure}[ht]
\centering
\captionsetup{justification=centering}
\begin{tabular}{c}
\includegraphics[width = 0.5\textwidth]{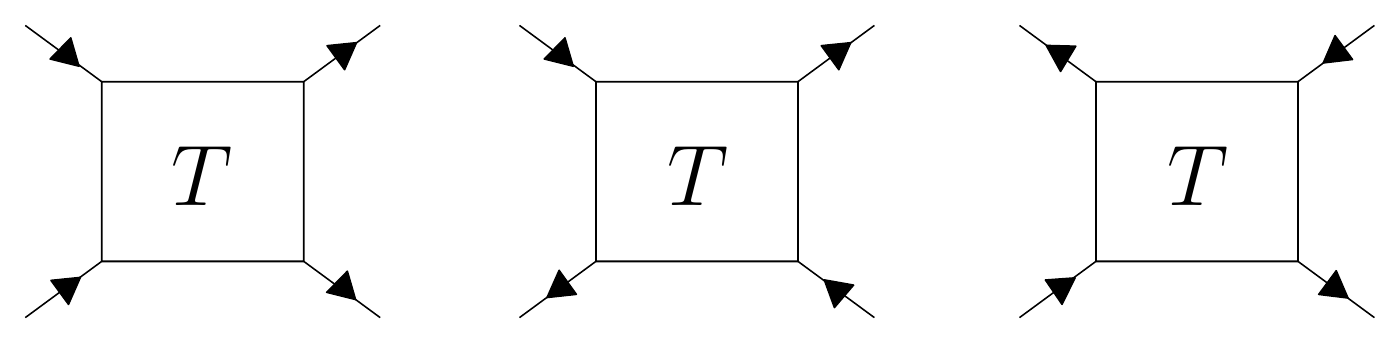}  \\
(a)  \hspace{.85in} (b) \hspace{.85in} (c) \\
\end{tabular}
\caption{(a) A left-right oriented tangle.  (b) \& (c) Diagonally oriented tangles.}
\label{fig:tangleori}
\end{figure}
\noindent Note that while a diagonally oriented tangle can have consistently oriented numerator and denominator closures, a left-right oriented tangle only has a consistently oriented numerator closure.  It is also important to note that the tangles $0$ and $\infty$ can be diagonally oriented, but only $0$ can be left-right oriented.

For diagonally oriented tangles, Kauffman \cite{kauffman_conway_1981} gave a definition for an invariant which is well-behaved under tangle addition.

\begin{defn}[\cite{kauffman_conway_1981}]
Given a diagonally oriented tangle $T$, let the fraction of the tangle be denoted $F(T) = \nabla(T^N)/\nabla(T^D)$.
\end{defn}
\noindent We will usually not consider $a/b$ equivalent to $ac/bc$, so that the numerator and the denominator of the fraction are well-defined quantities.
\begin{thm}[\cite{kauffman_conway_1981}, originally due to Conway]\label{TangleFractionSumThm}
Given tangles $T$ and $U$ that are diagonally oriented such that $T+U$ is coherently oriented, $F(T+U) = F(T) + F(U)$. Specifically,
    \[ \nabla((T+U)^N) = \nabla(T^N)\nabla(U^D) + \nabla(T^D)\nabla(U^N) \]
and
    \[ \nabla((T+U)^D) = \nabla(T^D)\nabla(U^D). \]
\end{thm}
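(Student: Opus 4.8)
The plan is to prove the two displayed identities for $\nabla((T+U)^N)$ and $\nabla((T+U)^D)$ directly; the additivity statement $F(T+U)=F(T)+F(U)$ then follows immediately by dividing the first identity by the second. I would fix the tangle $T$ and regard each side of each identity as a function of the oriented tangle $U$. Setting $P(U)=\nabla((T+U)^N)$ and $Q(U)=\nabla((T+U)^D)$, together with the proposed right-hand sides $\tilde P(U)=\nabla(T^N)\nabla(U^D)+\nabla(T^D)\nabla(U^N)$ and $\tilde Q(U)=\nabla(T^D)\nabla(U^D)$, the key observation is that all four functions satisfy the \emph{same} Conway skein recursion (Definition \ref{ConwayPolynomialDef}) in the variable $U$. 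Indeed, any crossing of $U$ lies inside the tangle region, hence inside both closures $(T+U)^N$ and $(T+U)^D$, so applying the skein relation at that crossing gives $P(U_+)-P(U_-)=zP(U_0)$ and $Q(U_+)-Q(U_-)=zQ(U_0)$, where $U_0$ is the oriented resolution. A one-line computation using the skein relation for $U$ alone shows that $\tilde P$ and $\tilde Q$ obey the identical recursions, since $\nabla$ applied to the numerator and denominator closures of $U$ each satisfy it.

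Next I would verify the two base cases. For $U=0$ the zero tangle is the identity for tangle addition, so $(T+0)^N=T^N$ and $(T+0)^D=T^D$; combined with $\nabla(0^N)=0$ and $\nabla(0^D)=1$ (the two closures of the crossingless horizontal tangle being a two-component unlink and an unknot), both identities reduce to tautologies. For $U=\infty$ a direct tracing of the arcs of $T+\infty$ gives $(T+\infty)^N=T^D$ and $(T+\infty)^D=T^D\sqcup\bigcirc$; together with $\nabla(\infty^N)=1$ and $\nabla(\infty^D)=0$ these again match the claimed formulas, the denominator case relying on the fact that a split diagram has vanishing Conway polynomial.

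Finally I would conclude by the same reduction principle that makes the Conway polynomial itself well-defined: the oriented skein module of the disk with four diagonally oriented boundary points is generated over $\Z[z]$ by the two trivial tangles $0$ and $\infty$, since repeated crossing changes—each paid for by a factor of $z$ times a resolution with strictly fewer crossings—reduce any diagonally oriented $U$ to a $\Z[z]$-combination of $0$ and $\infty$. Because $P-\tilde P$ and $Q-\tilde Q$ respect the homogeneous form of this recursion and vanish on both generators, they vanish for every $U$, which is exactly the pair of displayed identities. This argument is the oriented analogue of the bracket-vector computation in Proposition \ref{BracketVectorSumProp}. The step I expect to be the main obstacle is the orientation bookkeeping: one must confirm that coherence of $T+U$ is preserved under crossing changes and oriented smoothings, so that every intermediate tangle stays diagonally oriented and every closure stays consistently oriented, and that the oriented resolution $U_0$ of a crossing in $U$ is precisely the smoothing appearing in the skein relation for each closure. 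Checking that the reduction never leaves the class of diagonally oriented tangles, and carefully accounting for the disjoint-circle terms that appear along the way (as in $(T+\infty)^D$), is where the genuine care is required; once that is in place, the polynomial identities are formal.
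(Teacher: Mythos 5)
Your argument is correct, and it is essentially the mechanism the paper itself relies on—though note the paper does not actually prove this theorem (it quotes it from \cite{kauffman_conway_1981}); the honest comparison is with the machinery the paper builds around it, namely Theorem \ref{ConReduceThm} and Lemma \ref{DiagonalCon}. Your three ingredients all check out: both sides of each identity satisfy the same Conway recursion in $U$ (crossing changes and oriented smoothings take place away from the boundary of the tangle ball, so every intermediate tangle stays diagonally oriented and every closure stays coherent); your base-case identities $(T+0)^N=T^N$, $(T+0)^D=T^D$, $(T+\infty)^N=T^D$, $(T+\infty)^D=T^D\sqcup\bigcirc$, together with $\nabla(0^N)=\nabla(\infty^D)=0$ and $\nabla(0^D)=\nabla(\infty^N)=1$, are right; and the disjoint-circle generators you worry about are harmless because any function built from $\nabla$ of links vanishes on split diagrams. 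The one step you defer—that every diagonally oriented tangle reduces under the skein relation to a $\Z[z]$-combination of $0$ and $\infty$—is the genuinely nontrivial point: crossing changes do not decrease crossing number, so termination needs the double induction (on the crossing number and on a descending-template count such as $u_1(T)+o_2(T)$) that the paper carries out in proving Theorem \ref{ConReduceThm}; your parenthetical gloss blurs this, but your appeal to the standard well-definedness argument for $\nabla$ is pointing at exactly that induction. Finally, observe that Lemma \ref{DiagonalCon} packages your entire argument: applying it to the links $L^U=(T+U)^N$ and $L^U=(T+U)^D$, viewed as links containing the diagonally oriented tangle $U$, gives
\[ \nabla((T+U)^N)=\nabla(U^D)\nabla(T^N)+\nabla(U^N)\nabla(T^D), \qquad \nabla((T+U)^D)=\nabla(U^D)\nabla(T^D)+\nabla(U^N)\cdot 0, \]
by your closure identities, so once that lemma is in hand the theorem is a two-line corollary; writing it that way would let you cite the reduction argument once instead of re-running it inline.
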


Recall that we denote by $L^T$ a link diagram which, inside some disk intersecting the link diagram in four points, contains a tangle diagram $T$.  When $L^T$ is oriented, then $T$ is also oriented and the resulting orientation may be left-right or diagonal as in Definition \ref{defn:oriented tangle}.
The following fact will be useful.

\begin{thm}\label{ConReduceThm}
Let $L^T$ be an oriented link diagram which contains a left-right oriented tangle diagram $T$.
Then Definition \ref{ConwayPolynomialDef} can be used to calculate $\nabla(L^T)$ as a linear combination
    \[ \nabla(L^T) = p(T)\nabla(L^0) + q(T)\nabla(L^1) \]
for some $p(T),q(T) \in \Z[z]$ depending only on the tangle diagram $T$, where $0$ and $1$ denote the corresponding tangles. \\ Similarly, if $L^T$ is an oriented link diagram  which contains a diagonally oriented tangle diagram $T$, then $\nabla(L^T)$ can be calculated as
    \[ \nabla(L^T) = P(T)\nabla(L^0) + Q(T)\nabla(L^\infty) \]
for some $P(T),Q(T)\in\Z[z]$ depending only on the tangle diagram  $T$.
\end{thm}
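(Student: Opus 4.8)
The plan is to prove both statements by induction on the number of crossings in the tangle diagram $T$, using the Conway skein relation (Definition \ref{ConwayPolynomialDef}) to peel off one crossing at a time. The key observation is that the module of ``formal tangle values'' is two-dimensional, spanned by the images of two basic tangles: for a left-right oriented tangle these will be the $0$ and $1$ tangles, while for a diagonally oriented tangle they will be the $0$ and $\infty$ tangles. The coefficients $p(T),q(T)$ (resp. $P(T),Q(T)$) are then the coordinates of $T$ in this basis, and they depend only on the tangle diagram because the skein relation is local.

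First I would set up the base case. For a left-right oriented tangle, the only crossingless diagrams with the correct boundary orientation are the $0$ tangle (giving $p=1,q=0$) and — crucially — one must check that $\infty$ cannot be left-right oriented, which is already noted in the text after Definition \ref{defn:oriented tangle}. So the crossingless left-right tangles reduce to $0$, and a single crossing is either $+1$ or $-1$, both of which are handled by applying the skein relation once to express them in terms of $0$ and $1$. For the diagonally oriented case, the crossingless diagrams are $0$ and $\infty$, which are exactly the two basis elements, so the base case is immediate. The inductive step is where the real content lies: I would choose any crossing in $T$, apply $\nabla(L_+)-\nabla(L_-)=z\nabla(L_0)$ at that crossing, and observe that the two resolutions $L_{\mp}$ (the crossing change) and $L_0$ (the oriented smoothing) each contain a tangle with one fewer crossing. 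Here I must verify that the oriented smoothing and crossing change are compatible with the boundary orientation type: in the left-right case the smoothing $L_0$ yields a tangle whose closure is still consistently oriented, and likewise the crossing-changed tangle stays left-right oriented, so the inductive hypothesis applies to both and yields an expression in $\Z[z]$-linear combinations of $\nabla(L^0)$ and $\nabla(L^1)$.

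The main obstacle will be bookkeeping the orientations under smoothing. The oriented skein relation forces a specific (coherent) smoothing $L_0$ at the chosen crossing, and I must confirm that this smoothing does not change the boundary-orientation class of the ambient tangle — that is, that resolving a crossing inside a left-right oriented tangle produces again a left-right oriented (or suitably degenerate) tangle, and similarly for the diagonal case. The subtlety is that an oriented smoothing can, in principle, merge or reconnect strands in a way that alters which boundary points are connected; one has to check that within the four-ended tangle the connectivity pattern compatible with the two allowed orientation types is preserved, and that no inconsistently oriented intermediate tangle arises. Once this orientation-tracking is pinned down, linearity of $\nabla$ under the skein relation and the fact that the coefficients multiply through $\Z[z]$ gives the claimed two-term expansion, with $p(T),q(T)$ (resp. $P(T),Q(T)$) well-defined as polynomials in $z$ independent of the ambient diagram $L$, since every operation performed was local to the disk $D$ containing $T$.
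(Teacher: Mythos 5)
There is a genuine gap in your inductive step. You claim that after applying the skein relation at a chosen crossing, ``the two resolutions $L_{\mp}$ (the crossing change) and $L_0$ (the oriented smoothing) each contain a tangle with one fewer crossing.'' This is false for the crossing change: switching a crossing from over to under leaves the number of crossings unchanged; only the oriented smoothing removes a crossing. Consequently an induction on crossing number alone does not close --- the term $\nabla(L^{T_\times})$ coming from the crossing change is not covered by your inductive hypothesis, and your argument never terminates. (Your orientation bookkeeping worry, by contrast, is a non-issue: both the crossing change and the oriented smoothing are local to the crossing and do not touch the four boundary points, so they preserve the left-right or diagonal orientation class automatically.)

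The paper resolves exactly this difficulty with a double induction. The outer induction is on the crossing number $n$ (this handles the smoothed term $T_0$). The inner induction is on an auxiliary complexity $N = u_1(T) + o_2(T)$, where $u_1(T)$ counts undercrossings of the strand entering at NW with other components and $o_2(T)$ counts overcrossings of the strand entering at SW with other components; switching such a crossing strictly decreases $N$ (to $m-1$ or $m-2$), so the crossing-changed term $T_\times$ is covered by the inner hypothesis. The inner base case $N=0$ is where the real geometric content lives: there the NW strand lies over everything and the SW strand under everything, so either an extra closed component is split off (giving $\nabla(L^T)=0$) or the diagram is isotopic to a connected sum $L^0\# K_1\# K_2$ or $L^1\# K_1\# K_2$, whence $\nabla(L^T)=\nabla(K_1)\nabla(K_2)\nabla(L^0)$ or $\nabla(K_1)\nabla(K_2)\nabla(L^1)$. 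Your proposal is missing both this termination mechanism and the treatment of extra closed components (which already arise in the crossingless base case, where split loops force $\nabla(L^T)=0$). To repair your proof you would need to introduce some such secondary measure of distance from a descending-type diagram; without it, the approach fails.
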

\begin{proof}
We first assume that all tangles are left-right oriented unless otherwise stated. We induct on the number of crossings $n$ of a tangle $T$. For $n=0$, $T$ must be the (split) union of the tangle 0 and possibly some finite number of simple loops. If the number of loops is nonzero, $\nabla(L^T) = 0$ since the Conway polynomial of a split link is $0$. Otherwise, $\nabla(L^T) = \nabla(L^0)$.

Let $T$ be a left-right oriented tangle with $n=k$ crossings. Suppose that $\nabla(L^T)$ can be formally expanded as a linear combination of $\nabla(L^0)$ and $\nabla(L^1)$ for some $k \ge 1$. We show that if $T$ has $n=k+1$ crossings, $\nabla(T)$ can be formally expanded as a linear combination of $\nabla(L^0)$ and $\nabla(L^1)$. 
Let strand 1 be the strand starting at the NW corner, and let strand 2 be the strand starting at the SW corner. We induct on $N = u_1(T) + o_2(T)$, where $u_1(T)$ is the number of undercrossings of strand 1 not with itself and $o_2(T)$ is the number of overcrossings of strand 2 not with itself. 

Suppose $N=0$. Then strand 1 passes over all other components, and strand 2 passes under all other components. If there is another connected component, it must be split from both strands, and $\nabla(L^T) = 0$. Otherwise, the diagram involving $L^T$ is isotopic to a connected sum, either $L^0\#K_1\#K_2$ or $L^1\#K_1\#K_2$ for some knots $K_1$ and $K_2$, depending on whether strand 1 ends in the NE or SE corner. Specifically, $K_1$ comes from strand 1 and $K_2$ comes from strand 2. Then $\nabla(L^T) = \nabla(K_1)\nabla(K_2)\nabla(L^0)$ or $\nabla(L^T) = \nabla(K_1)\nabla(K_2)\nabla(L^1)$.

Now suppose $N = m$ for $n = k+1$ crossings. Either strand 1 has an undercrossing not with itself, or strand 2 has an overcrossing not with itself. In either case, let $T_0$ be $T$ with this crossing smoothed (with the orientation) and let $T_\times$ be $T$ with this crossing switched. We know that $\nabla(L^{T_0})$ is a linear combination of $\nabla(L^0)$ and $\nabla(L^1)$ by the induction hypothesis on $n$, the number of crossings of $T$. Since $T_\times$ has either $u_1(T_\times) + o_2(T_\times)$ equal to either $m-1$ or $m-2$, $\nabla(T_\times)$ is a linear combination of $\nabla(L^0)$ and $\nabla(L^1)$ by the induction hypothesis on $N$. Then
	\[ \nabla(L^T) = \nabla(L^{T_\times}) \pm z\nabla(L^{T_0}) \]
is also a linear combination of $\nabla(L^0)$ and $\nabla(L^1)$. By induction, the result holds for all left-right oriented tangles $T$.

The proof for diagonally oriented tangles is identical except that strand 1 and strand 2 start at diagonally opposite corners, and $\nabla(L^T)$ is reduced to a linear combination of $\nabla(L^0)$ and $\nabla(L^\infty)$.
\end{proof}


The next lemma motivates the definition of $\con(T)$.
\begin{lem}\label{DiagonalCon}
Let $L^T$ be an oriented link diagram which contains a diagonally oriented tangle diagram $T$.
Then
    \[ \nabla(L^T) 
        = \nabla(T^D)\nabla(L^0) + \nabla(T^N)\nabla(L^\infty). \]
\end{lem}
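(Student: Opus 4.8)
The plan is to start from Theorem \ref{ConReduceThm}, which already guarantees that for a diagonally oriented tangle $T$ one has an expansion
\[ \nabla(L^T) = P(T)\nabla(L^0) + Q(T)\nabla(L^\infty) \]
with $P(T), Q(T) \in \Z[z]$ depending only on the tangle diagram $T$ and not on the ambient diagram $L$. Thus the entire content of the lemma is to identify these two coefficients as $P(T) = \nabla(T^D)$ and $Q(T) = \nabla(T^N)$. Since $P$ and $Q$ are independent of the surrounding diagram, I would pin them down by evaluating the expansion on two cleverly chosen link diagrams built from $T$, rather than by tracking the coefficients through the inductive construction in the proof of Theorem \ref{ConReduceThm}.

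Concretely, I would first specialize $L^T$ to the numerator closure $T^N$, so that $L^0$ and $L^\infty$ become $0^N$ and $\infty^N$, yielding $\nabla(T^N) = P(T)\nabla(0^N) + Q(T)\nabla(\infty^N)$; then specialize $L^T$ to the denominator closure $T^D$, yielding $\nabla(T^D) = P(T)\nabla(0^D) + Q(T)\nabla(\infty^D)$. The proof then reduces to computing the four elementary closures $0^N, 0^D, \infty^N, \infty^D$. Each of these is either a two-component unlink (whose Conway polynomial vanishes, as it is split) or a single unknot (whose Conway polynomial is $1$); specifically I expect $\nabla(0^N)=\nabla(\infty^D)=0$ and $\nabla(0^D)=\nabla(\infty^N)=1$. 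Substituting these values collapses the two displayed equations to $\nabla(T^N)=Q(T)$ and $\nabla(T^D)=P(T)$, which is exactly the claimed identity.

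The main obstacle is bookkeeping with orientations rather than any deep computation. Because $T$ is diagonally oriented, both its numerator and denominator closures are coherently orientable (as already noted preceding Theorem \ref{TangleFractionSumThm}), and I must verify that the diagonal end-orientations induce consistent orientations on the constituent $0$- and $\infty$-tangles, so that the four basic closures genuinely are coherently oriented unknots or unlinks; only then are the values $0$ and $1$ above legitimate, since the Conway polynomial depends on having a coherent orientation. Tracing the two strands through each of the four closures under the fixed diagonal orientation is the step I would carry out most carefully, as a sign error or an incoherently oriented component there would invalidate the clean evaluation on which the whole argument rests.
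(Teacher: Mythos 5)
Your proposal is correct and follows essentially the same route as the paper's proof: both invoke Theorem \ref{ConReduceThm}, specialize the expansion to $L^T = T^N$ and $L^T = T^D$, and evaluate the four basic closures ($\nabla(0^N)=\nabla(\infty^D)=0$ for the split unlinks, $\nabla(0^D)=\nabla(\infty^N)=1$ for the unknots) to identify $P(T)=\nabla(T^D)$ and $Q(T)=\nabla(T^N)$. If anything, your bookkeeping is slightly more careful than the paper's, whose displayed equations swap the labels of the two vanishing closures (writing $0^N$ where $\infty^D$ belongs and vice versa), a harmless slip since both are split two-component unlinks with zero Conway polynomial.
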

\begin{proof}
    Note that $0^N$ and $\infty^D$ are split links and thus have Conway polynomial $0$.
    By Theorem \ref{ConReduceThm}, for any oriented link diagram $L^T$ containing the diagonally oriented tangle $T$, the Conway polynomial $\nabla(L^T)$ can be computed as a sum $P(T)\nabla(L^0) + Q(T)\nabla(L^\infty)$, where $P(T),Q(T)\in\Z[z]$ only depend on $T$.
    Recall that $T^N$ and $T^D$ are links containing $T$. If $T$ is diagonally oriented, then there is an induced orientation on $T^N$ and $T^D$. We have 
        \[ \nabla(T^D) = P(T)\nabla(0^D) + Q(T)\nabla(0^N) 
            = P(T)\cdot 1 + Q(T)\cdot 0 = P(T)\]
    and
        \[ \nabla(T^N) = P(T)\nabla(\infty^D) + Q(T)\nabla(\infty^N)
            = P(T)\cdot 0 + Q(T)\cdot 1 = Q(T), \]
as desired.
\end{proof}

\noindent This lemma and the bracket vector definition in \cite{eliahou_infinite_2003} indicate that it is useful to reduce a tangle diagram into a formal sum via a skein relation.

\begin{defn}\label{ConVectorDef}
Let $L^T$ be an oriented link diagram which contains a left-right oriented tangle diagram $T$.
Then $\nabla(L^T)$ can be computed as $p(T)\nabla(L^0) + q(T)\nabla(L^1)$ for some polynomials $p(T),q(T)\in\Z[z]$ by Theorem \ref{ConReduceThm}. Define the Conway vector of $T$ to be
    \[ \con(T) = \begin{bmatrix}p(T)&q(T)\end{bmatrix}^t. \]
\end{defn}

\begin{prop}\label{ConVectorWellDefinedProp}
Let $T$ be a left-right oriented tangle. Then $\con(T)$ is a well-defined invariant of isotopy classes of tangles. Explicitly, $\con(T) = (\nabla((T-1)^N),\nabla(T^N))$.
    \begin{proof}
    We reduce $T^N$ and $(T-1)^N$ using Theorem \ref{ConReduceThm}, since these are both oriented tangle diagrams containing $T$ as a left-right oriented tangle diagram:
        \[ \nabla(T^N) = p(T)\nabla(0^N) + q(T)\nabla(1^N) 
            = p(T)\cdot 0 + q(T)\cdot 1 = q(T),\]
    and
        \[ \nabla((T-1)^N) = p(T)\nabla((0-1)^N) + q(T)\nabla((1-1)^N)
            = p(T)\cdot 1 + q(T)\cdot 0 = p(T). \]
    Then if $T$ is changed by an isotopy, $\nabla(T^N)$ and $\nabla((T-1)^N)$ remain constant. Thus $\con(T)$ is defined for isotopy classes of tangles.
    \end{proof}
\end{prop}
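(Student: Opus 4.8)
The plan is to identify the coefficients $p(T)$ and $q(T)$ of Definition \ref{ConVectorDef} explicitly by feeding the reduction formula of Theorem \ref{ConReduceThm} into two carefully chosen closures of $T$. Theorem \ref{ConReduceThm} only guarantees that \emph{some} expansion $\nabla(L^T) = p(T)\nabla(L^0) + q(T)\nabla(L^1)$ exists, obtained by a skein reduction performed inside the tangle disk; to prove $\con(T)$ well-defined I must show that these coefficients are forced, independent both of the choices in the reduction and of the chosen diagram for $T$.

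First I would apply the formula to $L^T = T^N$. Since the reduction happens entirely inside the tangle disk, replacing $T$ by the tangles $0$ and $1$ produces $L^0 = 0^N$ and $L^1 = 1^N$; the former is a two-component unlink with $\nabla = 0$ and the latter is an unknot with $\nabla = 1$, so the formula collapses to $\nabla(T^N) = q(T)$. Next I would apply the same formula to $L^T = (T-1)^N$, another valid oriented link diagram containing the left-right oriented tangle $T$. Here replacing $T$ by $0$ and by $1$ gives $L^0 = (0-1)^N = (-1)^N$, an unknot with $\nabla = 1$, and $L^1 = (1-1)^N$, which (since $1-1$ is isotopic to the $0$ tangle by a Reidemeister II move) is a two-component unlink with $\nabla = 0$. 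Thus $\nabla((T-1)^N) = p(T)$.

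Together these evaluations give $\con(T) = (\nabla((T-1)^N), \nabla(T^N))$. Because the right-hand sides are Conway polynomials of genuine oriented links, they are unaffected by any isotopy of $T$ (which induces an isotopy of the two closures) and by the details of the reduction, so $(p(T),q(T))$ is uniquely determined; this simultaneously yields well-definedness on diagrams and invariance under isotopy. The step I expect to require the most care is the decoupling: each single closure yields only one linear relation between $p(T)$ and $q(T)$, so the argument depends on choosing $T^N$ and $(T-1)^N$ precisely so that in each closure the reference links $L^0$ and $L^1$ degenerate to an unknot and a two-component unlink, making the resulting $2\times 2$ system triangular and hence solvable.
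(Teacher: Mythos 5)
Your proposal is correct and takes essentially the same route as the paper's proof: apply the expansion of Theorem \ref{ConReduceThm} to the two closures $T^N$ and $(T-1)^N$, where the reference links degenerate to an unknot and a two-component unlink, forcing $q(T) = \nabla(T^N)$ and $p(T) = \nabla((T-1)^N)$, and hence invariance under isotopy. Your explicit remarks on why this pins down $(p(T),q(T))$ independently of the choices made in the skein reduction simply spell out what the paper leaves implicit.
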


Let $L$ be a diagrammatic operator which takes a single left-right oriented tangle $T$ and yields an oriented link $L(T)$. In particular, $L(T)$ is an oriented link containing the tangle $T$ in some ball. Applying Definition \ref{ConVectorDef}, we get that
    \[ \nabla(L(T)) = p(T)\nabla(L(0)) + q(T)\nabla(L(1)). \]
This proves Theorem \ref{NewInvt}, as this defines a $\Z[z]$-module homomorphism
$\varphi_L : \Z[z]^2 \to \Z[z]$ with
    \[ \varphi_L((p,q)) := p\cdot\nabla(L(0)) + q\cdot\nabla(L(1)) \]
such that $\varphi_L(\con(T)) = \nabla(L(T))$.

The Conway vector gives formulas for various combinations of tangles.
These formulas mirror those from \cite[Prop 2.2]{eliahou_infinite_2003}.
\begin{prop}\label{ConVectorFormulasProp}
Let $T$ be a left-right oriented tangle.
\begin{enumerate}
\item Let $U$ be a left-right oriented tangle. Then
    \[ \con(T+U) = \begin{bmatrix}p(T)p(U) + q(T)q(U)\\
		p(T)q(U) + q(T)p(U) + zq(T)q(U)\end{bmatrix}. \]
\item Let $W$ be a diagonally oriented tangle such that $T*W$ is a left-right oriented tangle. Then
    \[ \con(T*W) = \begin{bmatrix}\nabla(W^N)&\nabla(W^D)\\
        0 & \nabla(W^N)\end{bmatrix}\con(T). \]
\end{enumerate}
    \begin{proof}  
    First, we use Definition \ref{ConwayPolynomialDef} to see that, if $L^2$ is any link diagram containing the integer tangle $2$, then
        \[ \nabla(L^2) = \nabla(L^0) + z\nabla(L^1). \]
    Note that if $T$ and $U$ are left-right oriented tangles, then so is $T+U$.  Consider an oriented  link diagram $L^{T+U}$ containing $T+U$ in some disk intersecting the link diagram in four points. Reducing $T$ and $U$ in turn using the skein relation,
        \begin{align*}
        \nabla(L^{T+U}) &= p(T)p(U)\nabla(L^0) 
            + (p(T)q(U) + q(T)p(U))\nabla(L^1) + q(T)q(U)\nabla(L^2) \\
            & = (p(T)p(U) + q(T)q(U))\nabla(L^0) 
                + (p(T)q(U) + q(T)p(U)+zq(T)q(U))\nabla(L^1).
        \end{align*}
    This proves the first statement.
    
    To prove the second statement,
    we first note that $((T*0)-1)^N$ is isotopic to $T^N$, $T*\infty = T$, and $(T*0)^N = T^N\sqcup O$ is a split link which thus has Conway polynomial $0$.  Recall that the Conway polynomial is a link invariant and therefore does not change under isotopy.  We can treat diagrams of $((T*W)-1)^N$ and $(T*W)^N$ as oriented link diagrams containing a diagonally oriented tangle $W$.
    Applying Lemma \ref{DiagonalCon}, Definition \ref{ConVectorDef}, and Proposition \ref{ConVectorWellDefinedProp}, we see that
        \begin{align*}
        p(T*W) &= \nabla(((T*W)-1)^N)\\
        & = \nabla(W^D)\nabla(((T*0)-1)^N) + \nabla(W^N)\nabla(((T*\infty)-1)^N) \\
        & = \nabla(W^D)\nabla(T^N) + \nabla(W^N)\nabla((T-1)^N) \\
        & = \nabla(W^N)p(T) + \nabla(W^D)q(T), \\
        q(T*W) &= \nabla((T*W)^N) \\
        & = \nabla(W^D)\nabla((T*0)^N) + \nabla(W^N)\nabla((T*\infty)^N)\\
        & = 0 + \nabla(W^N)\nabla(T^N) = \nabla(W^N)q(T). \qedhere
        \end{align*}
    \end{proof}
\end{prop}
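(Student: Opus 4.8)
The plan is to mirror the computation of the bracket vector under tangle sum (Proposition \ref{BracketVectorSumProp}), replacing the Kauffman bracket calculus by the Conway skein relation together with the explicit coordinate description $\con(T) = (\nabla((T-1)^N),\nabla(T^N))$ established in Proposition \ref{ConVectorWellDefinedProp}. Both parts reduce to expanding a closure of a combined tangle into the standard generators and then identifying the resulting links by isotopy.

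For part (1), I would first record the elementary skein identity for the integer tangle $2$: applying Definition \ref{ConwayPolynomialDef} at the obvious crossing of any link diagram $L^2$ gives $\nabla(L^2) = \nabla(L^0) + z\nabla(L^1)$, since the oriented smoothing produces $L^1$ and the crossing change produces $L^0$. Then, starting from a link diagram $L^{T+U}$ and using Theorem \ref{ConReduceThm} to reduce the left-right oriented tangle $T$ and afterward $U$, I would expand $\nabla(L^{T+U})$ into the four contributions indexed by $\{0,1\}\times\{0,1\}$. The essential input is the table of tangle-addition identities $0+0=0$, $0+1=1+0=1$, and $1+1=2$; substituting the skein identity for the $2$-tangle and collecting the coefficients of $\nabla(L^0)$ and $\nabla(L^1)$ produces exactly the stated vector. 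This step is bilinear bookkeeping and presents no genuine difficulty.

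For part (2), I would use the coordinates $p(T*W)=\nabla(((T*W)-1)^N)$ and $q(T*W)=\nabla((T*W)^N)$ from Proposition \ref{ConVectorWellDefinedProp}, and view each of these numerator closures as an oriented link diagram containing the \emph{diagonally} oriented tangle $W$. Applying Lemma \ref{DiagonalCon} expresses both coordinates in the form $\nabla(W^D)(\cdots)+\nabla(W^N)(\cdots)$, where the two inner terms are the closures obtained by replacing $W$ with the $0$- and $\infty$-tangles. It then remains to identify these four closures: since $T*\infty=T$ we get $(T*\infty)^N=T^N$ and $((T*\infty)-1)^N=(T-1)^N$; the tangle $T*0$ introduces a split unknot, so $(T*0)^N=T^N\sqcup O$ has Conway polynomial $0$; and finally $((T*0)-1)^N$ is isotopic to $T^N$. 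Feeding these values into Lemma \ref{DiagonalCon} recovers $q(T*W)=\nabla(W^N)q(T)$ and $p(T*W)=\nabla(W^N)p(T)+\nabla(W^D)q(T)$, which is precisely the claimed matrix acting on $\con(T)$.

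The main obstacle is the isotopy $((T*0)-1)^N\cong T^N$, the one step that is geometric rather than formal. Stacking $T$ over the zero tangle caps off the two lower ends of $T$ and leaves a free horizontal strand beneath it; adding the single $(-1)$-crossing on the right and then taking the numerator closure must be shown to absorb this extra strand and twist, returning the plain numerator closure of $T$. I would verify this by an explicit short sequence of Reidemeister moves on the local picture, sliding the free strand through the closure arc to cancel the lone crossing, while checking that the induced orientation stays consistent throughout. This picture-level verification is exactly what the skein calculus cannot supply, so it is where I would concentrate the care.
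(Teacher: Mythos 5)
Your proposal is correct and follows essentially the same route as the paper's own proof: part (1) via the skein identity $\nabla(L^2)=\nabla(L^0)+z\nabla(L^1)$ and bilinear expansion of $\nabla(L^{T+U})$, and part (2) via Lemma \ref{DiagonalCon} applied to the closures $((T*W)-1)^N$ and $(T*W)^N$ together with the identifications $T*\infty=T$, $(T*0)^N=T^N\sqcup O$, and $((T*0)-1)^N\cong T^N$. The isotopy you single out for extra care, $((T*0)-1)^N\cong T^N$, is exactly the step the paper asserts without further justification, and your proposed Reidemeister-move verification (sliding the free strand through the closure arc to cancel the lone crossing) is valid, so your treatment matches the paper's while making that geometric step explicit.
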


\section{Conway Polynomials of 2-sky-like Links}

In this section, we find an infinite family of 2-sky-like links with Conway polynomial indistinguishable from $H$, the connected sum of two Hopf links. We show, using the Jones polynomial, that all of the links in this family are distinct from each other and distinct from $H$. More specifically, we will show that
    \begin{equation}\label{ConEqual} \nabla(C_+(T)) = \nabla(C_-(T)) = z^2 
        = \nabla(C_+(0)) = \nabla(C_-(0)) \end{equation}
for infinitely many left-right oriented tangles $T$. First, we identify a necessary and sufficient condition for this equation to be satisfied.
\begin{lem}\label{ConEqualLem}
Equation \ref{ConEqual} is satisfied by a tangle $T$ if and only if $\con(T) = (1,0)$.
    \begin{proof}
    We note that $C_+(T)$ and $C_-(T)$ are oriented links containing a left-right oriented tangle $T$.
    Using the Definition \ref{ConVectorDef} of $\con(T)$ and the fact that $C_+(1)$ is L4a1\{1\} and $C_-(1)$ is L4a1\{0\} in \cite{linkinfo}, we have
        \begin{align*}
        \nabla(C_+(T)) &= p(T)\nabla(C_+(0)) + q(T)\nabla(C_+(1)) \\
        & = (z^2)p(T) + (2z+z^3)q(T),
        \end{align*}
    and
        \[ \nabla(C_-(T)) = (z^2)p(T) - (2z)q(T). \]
    If $\con(T) = (1,0)$, then Equation \ref{ConEqual} is satisfied. Conversely, suppose Equation \ref{ConEqual} is satisfied.
    Since $\nabla(C_+(T)) = \nabla(C_-(T))$, we must have $q(T) = 0$, and it follows that $p(T) = 1$.
    \end{proof}
\end{lem}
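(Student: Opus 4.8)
The plan is to evaluate both $\nabla(C_+(T))$ and $\nabla(C_-(T))$ against the Conway vector $\con(T) = (p(T), q(T))^t$ via the reduction of Definition \ref{ConVectorDef}, and then to solve the resulting linear relations for $p(T)$ and $q(T)$. First I would observe that $C_+$ and $C_-$ are diagrammatic operators sending a left-right oriented tangle to an oriented link, so Definition \ref{ConVectorDef} (equivalently, Theorem \ref{NewInvt}) applies and gives
\[ \nabla(C_\pm(T)) = p(T)\,\nabla(C_\pm(0)) + q(T)\,\nabla(C_\pm(1)). \]

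The next step --- and the only one requiring genuine computation --- is to evaluate the four base links. For the tangle $0$, both $C_+(0)$ and $C_-(0)$ are orientations of the connected sum $H$ of two Hopf links; since the Conway polynomial of a Hopf link is $\pm z$ and Conway polynomials multiply under connected sum, $\nabla(C_\pm(0)) = z^2$. For the tangle $1$, the diagrams $C_+(1)$ and $C_-(1)$ are the oriented links L4a1\{1\} and L4a1\{0\} of \cite{linkinfo}, whose Conway polynomials are $2z + z^3$ and $-2z$ respectively. Substituting these yields
\[ \nabla(C_+(T)) = z^2 p(T) + (2z + z^3)q(T), \qquad \nabla(C_-(T)) = z^2 p(T) - 2z\,q(T). \]

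With these formulas in hand, both directions are immediate. For the forward direction, setting $p(T) = 1$ and $q(T) = 0$ gives $\nabla(C_+(T)) = \nabla(C_-(T)) = z^2$, so Equation \ref{ConEqual} holds. For the converse, assuming Equation \ref{ConEqual}, the equality $\nabla(C_+(T)) = \nabla(C_-(T))$ forces $(2z + z^3)q(T) = -2z\,q(T)$, i.e.\ $z(4 + z^2)q(T) = 0$ in the integral domain $\Z[z]$; since $z(4+z^2) \neq 0$, this gives $q(T) = 0$, and then $\nabla(C_+(T)) = z^2 p(T) = z^2$ forces $p(T) = 1$.

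I expect the main obstacle to lie in the orientation bookkeeping of the base-case computation rather than in any algebra: one must correctly match the two orientations $C_+(1)$ and $C_-(1)$ to the LinkInfo entries L4a1\{1\} and L4a1\{0\} and recover the correct signs, since it is precisely the discrepancy between $2z + z^3$ and $-2z$ that makes the coefficient $z(4+z^2)$ of $q(T)$ nonzero and thereby forces $q(T) = 0$. Once the signs are pinned down, the linear algebra over $\Z[z]$ is routine.
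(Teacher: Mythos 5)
Your proposal is correct and follows essentially the same route as the paper: reduce $\nabla(C_\pm(T))$ via Definition \ref{ConVectorDef}, evaluate the base links ($\nabla(C_\pm(0)) = z^2$, $\nabla(C_+(1)) = 2z+z^3$, $\nabla(C_-(1)) = -2z$), and then note that equality of $\nabla(C_+(T))$ and $\nabla(C_-(T))$ forces $q(T)=0$ and hence $p(T)=1$. Your explicit appeal to $\Z[z]$ being an integral domain just spells out the step the paper leaves implicit, and your Hopf-link computation of $\nabla(C_\pm(0))$ is a harmless addition.
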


\begin{figure}
    \centering
    \includegraphics[width=2in]{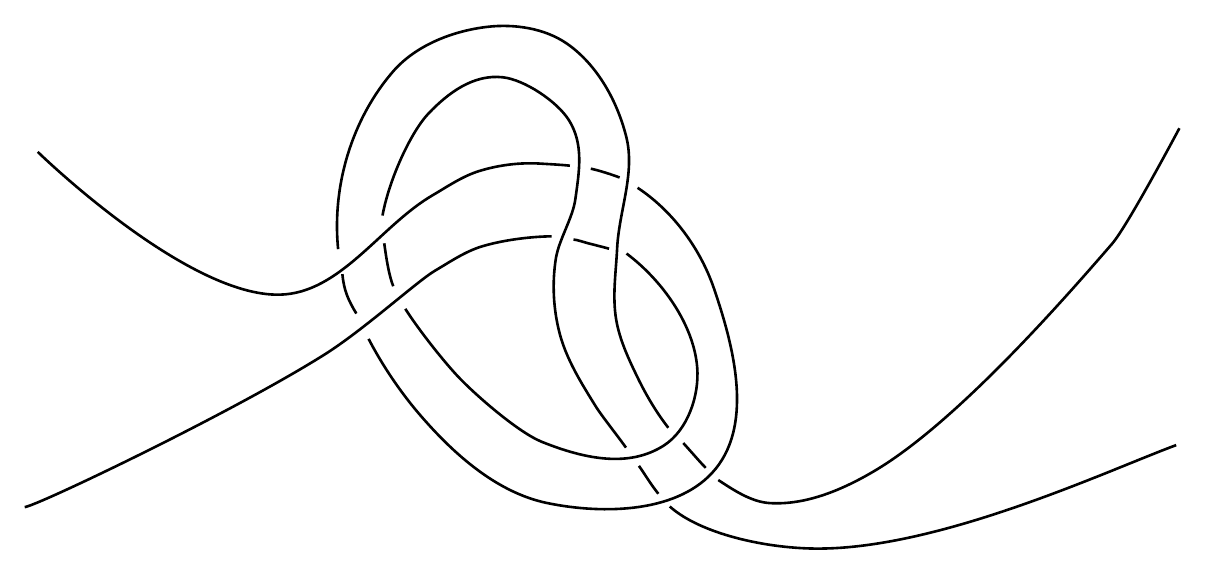}
    \caption{The tangle $T_C$.}
    \label{fig:T_C}
\end{figure}

We now construct solutions $T$ to Equation \ref{ConEqual}. In \cite[107]{kauffman_conway_1981}, Kauffman identifies three diagonally oriented tangles $T_A = 2^\rho$, $T_B = -6$, and $T_C$ with fractions
$F(T_A) = 1/z$, $F(T_B) = 3z/1$, and $F(T_C) = -3z/1$. We refer the reader to Section \ref{sec:background} for definitions of $T^\rho$ and integer tangles.  For a diagram of $T_C$, see Figure \ref{fig:T_C}.
Using Theorem \ref{TangleFractionSumThm},
    \[ F(T_A + T_B + T_C) = 1/z + 3z/1 - 3z/1 = 1/z + 0/(1\cdot 1)
        = (1\cdot 1 + 0\cdot z)/(z\cdot 1) = 1/z. \]
One can check that with the same orientation, $F(-T_A) = 1/(-z)$. Then a similar calculation gives
    \[ F(-T_A + T_B + T_C) = 1/(-z). \]
Let $T_+$ be $T_A + T_B + T_C$ rotated 90 degrees clockwise, and $T_-$ be $-T_A + T_B + T_C$ rotated 90 degrees clockwise. Since the rotation swaps numerator and denominator closures, $F(T_+) = z/1$ and $F(T_-) = -z/1$.
Let $T_0 = T_+ + T_-$.  Then $T_0$ is a diagonally oriented tangle which has
    \[ F(T_0) = F(T_+) + F(T_-) = (z\cdot 1 + (-z)\cdot 1)/(1\cdot 1) = 0/1. \]
In other words, $\nabla(T_0^N) = 0$ and $\nabla(T_0^D) = 1$.
Let $T_0(n)$ denote an $n$-fold sum $T_0 + \cdots + T_0$.
Then $F(T_0(n)) = 0/1$ for all positive integers $n$ by repeated application of Theorem \ref{TangleFractionSumThm}. If the tangle $1$ is given a left-right orientation, then $1*T_0(n)$ is left-right oriented.
The link $C(1*T_0(1))$ is shown in Figure \ref{fig:JSlink}. The links $C_+(1*T_0(n))$ and $C_-(1*T_0(n))$ are visibly 2-sky-like for all $n \in \Z_+$.

\begin{figure}
    \centering
    \includegraphics[width=3in]{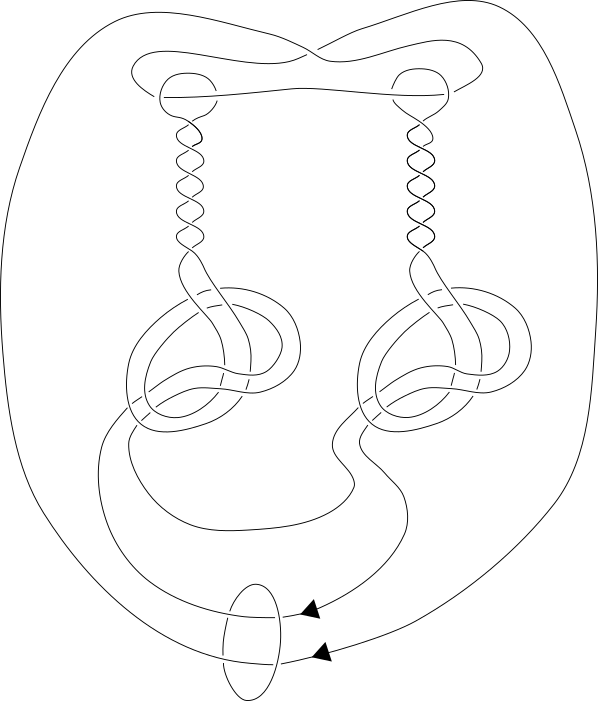}
    \caption{$C(1*T_0(1))$.}
    \label{fig:JSlink}
\end{figure}

\begin{thm}\label{JSLinkThm}
For any positive integer $n$, Equation \ref{ConEqual} is satisfied by $1*T_0(n)$, and the (unoriented) link $C(1*T_0(n))$ is not isotopic to the (unoriented) connected sum of two Hopf links $H$. The links $C(1*T_0(n))$ are distinct as unoriented links. Thus, there are infinitely many 2-sky-like links indistinguishable from $H$ by the Conway polynomial.
\begin{proof}
We show that the span of the bracket polynomial distinguishes all of the links $C(1*T_0(n))$, $n \in \Z_+$, from each other and from $C(0)$. With the help of \verb+KnotTheory`+ in Mathematica \cite{Mathematica}, we compute $\langle T_0^N\rangle$ and $\langle T_0^D\rangle$. By Definition \ref{BracketVectorDef}, for any tangle $T$,
    \[ \langle T^N\rangle = [\langle 0^N\rangle\quad\langle \infty^N\rangle]\br(T) 
        = [\delta\quad 1]\br(T), \]
where $\delta = -A^2-A^{-2}$. Similarly,
    \[ \langle T^D\rangle = [1\quad\delta]\br(T). \]
Thus,
    \[ \begin{bmatrix}\langle T^N\rangle\\
        \langle T^D\rangle\end{bmatrix}
        = \begin{bmatrix}\delta&1\\1&\delta\end{bmatrix}\br(T), \]
and so
    \[ \br(T) = \begin{bmatrix}f(T)\\g(T)\end{bmatrix}
        = \begin{bmatrix}\delta/(\delta^2-1)&-1/(\delta^2-1)\\
        -1/(\delta^2-1)&\delta/(\delta^2-1)\end{bmatrix}
        \begin{bmatrix}\langle T^N\rangle\\
        \langle T^D\rangle\end{bmatrix}. \]
For ease of notation, let $\mathrm{Poly}(y_1;y_2)\subset\Z[A,A^{-1}]$ be the set of Laurent polynomials with highest degree monomial $y_1$ and lowest degree monomial $y_2$.
Using the above formula, we calculate that
    \[ f(T_0) \in \mathrm{Poly}(A^{40};-A^{-52}),\quad
        g(T_0) \in \mathrm{Poly}(-A^{34};A^{-58}).  \]
By Proposition \ref{BracketVectorSumProp}, for $n \in \Z_+$,
    \[ \br(T_0(n+1)) =\br(T_0(n)+T_0)= \begin{bmatrix}f(T_0(n))f(T_0)\\
        f(T_0(n))g(T_0)+g(T_0(n))f(T_0)+\delta g(T_0(n))g(T_0)\end{bmatrix}.\]
Assume for some $n$ that
    \begin{equation}\label{br(T_0(n))Eq} f(T_0(n)) \in \mathrm{Poly}(A^{40n};(-1)^nA^{-52n})\quad\text{and}\quad g(T_0(n)) \in \mathrm{Poly}(-nA^{40n-6};(-1)^{n+1}A^{-60n+2}), \end{equation}
which is true for $n=1$. Then
    \[ f(T_0(n+1)) = f(T_0(n))f(T_0) 
        \in \mathrm{Poly}(A^{40(n+1)};(-1)^{n+1}A^{-52(n+1)}). \]
Similarly,
    \[  f(T_0(n))g(T_0) \in \mathrm{Poly}(-A^{40(n+1)-6};(-1)^nA^{-52n-58}), \]
    \[ g(T_0(n))f(T_0) \in \mathrm{Poly}(-nA^{40(n+1)-6};(-1)^{n+2}A^{-60(n+1)+10}), \]
    \[ \delta g(T_0(n))g(T_0) 
        \in \mathrm{Poly}(nA^{40(n+1)-10};(-1)^{n+2}A^{-60(n+1)+2}), \]
and so
    \[ g(T_0(n+1))
        \in \mathrm{Poly}(-(n+1)A^{40(n+1)-6};(-1)^{n+2}A^{-60(n+1)+2}). \]
By induction, equation \ref{br(T_0(n))Eq} holds for all $n\in\Z_+$.
From Definition \ref{BracketVectorDef},
    \[ \langle C(1*T_0(n))\rangle = f(T_0(n))\langle C(1*0)\rangle 
        + g(T_0(n))\langle C(1*\infty)\rangle. \]
We calculate that $\langle C(1*0)\rangle = -A^{11}-2A^{3}-A^{-5}$ and $\langle C(1*\infty)\rangle = A^9+A-A^{-3}+A^{-7}$. So for $n \in \Z_+$,
    \[ f(T_0(n))\langle C(1*0)\rangle \in
        \mathrm{Poly}(-A^{40n+11};(-1)^{n+1}A^{-52n-5}), \]
    \[ g(T_0(n))\langle C(1*\infty)\rangle \in
        \mathrm{Poly}(-nA^{40n+3};(-1)^{n+1}A^{-60n-5}), \]
and so
    \[ \langle C(1*T_0(n))\rangle \in
        \mathrm{Poly}(-A^{40n+11};(-1)^{n+1}A^{-60n-5}). \]
Thus,
    \[ \mathrm{span}(V(C(1*T_0(n)))) 
        = \mathrm{span}(\langle C(1*T_0(n))\rangle) = 100n+16 > 16. \]
Then the Jones polynomial of $C(1*T_0(n))$ is distinct for distinct $n \in \Z_+$ and not equal to the Jones polynomial of $C(0)$ for any orientation of the components. It follows that the links $C(1*T_0(n))$ are distinct from each other and distinct from $C(0)$.

It remains to show that $1*T_0(n)$ satisfies Equation \ref{ConEqual}. By Lemma \ref{ConEqualLem}, it is sufficient to show that $\con(1*T_0(n)) = (1,0)$. 
Recall that $\nabla(T_0(n)^N) = 0$ and $\nabla(T_0(n)^D) = 1$, and observe that $\con(1) = [0\quad 1]^t$.
Using Proposition \ref{ConVectorFormulasProp},
    \[ \con(1*T_0(n)) = \begin{bmatrix}0&1\\0&0\end{bmatrix}
    \begin{bmatrix}0\\1\end{bmatrix} = \begin{bmatrix}1\\0\end{bmatrix}.\]
This gives the desired result.
\end{proof}
\end{thm}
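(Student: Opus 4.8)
The plan is to treat the statement as two essentially independent assertions: that every link $1*T_0(n)$ satisfies Equation \ref{ConEqual} (so the Conway polynomial collapses to $z^2$ in each case), and that a finer invariant---the span of the Kauffman bracket---nevertheless separates the unoriented links $C(1*T_0(n))$ from each other and from $H = C(0)$.

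For the Conway assertion, Lemma \ref{ConEqualLem} reduces everything to showing $\con(1*T_0(n)) = (1,0)^t$ for all $n$. First I would record $\con(1) = (0,1)^t$, which is immediate from the explicit description $\con(T) = (\nabla((T-1)^N),\nabla(T^N))$ in Proposition \ref{ConVectorWellDefinedProp} together with $\nabla(0^N) = 0$ and $\nabla(1^N) = 1$. Next, the construction of $T_0$ gives $F(T_0) = 0/1$, so $\nabla(T_0^N) = 0$ and $\nabla(T_0^D) = 1$; iterating Theorem \ref{TangleFractionSumThm} yields $F(T_0(n)) = 0/1$, hence $\nabla(T_0(n)^N) = 0$ and $\nabla(T_0(n)^D) = 1$, for every $n$. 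Since $T_0(n)$ is diagonally oriented while $1*T_0(n)$ is left-right oriented, the vertical-sum formula of Proposition \ref{ConVectorFormulasProp}(2) applies and gives
\[
\con(1*T_0(n)) = \begin{bmatrix}\nabla(T_0(n)^N) & \nabla(T_0(n)^D)\\ 0 & \nabla(T_0(n)^N)\end{bmatrix}\begin{bmatrix}0\\1\end{bmatrix} = \begin{bmatrix}0 & 1\\ 0 & 0\end{bmatrix}\begin{bmatrix}0\\1\end{bmatrix} = \begin{bmatrix}1\\0\end{bmatrix},
\]
which is exactly the condition of Lemma \ref{ConEqualLem}.

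To distinguish the links I would pass to the span of the Kauffman bracket, which is a balanced-isotopy invariant and hence depends only on the unoriented link type. The computation proceeds in three steps. First, expand $\br(T_0)$ via Definition \ref{BracketVectorDef}, extracting the highest and lowest monomials of $f(T_0)$ and $g(T_0)$ (a finite, computer-assisted check). Second, use the tangle-sum recursion of Proposition \ref{BracketVectorSumProp} to express $\br(T_0(n+1))$ in terms of $\br(T_0(n))$ and $\br(T_0)$, and prove by induction on $n$ a closed form for the extreme monomials of $f(T_0(n))$ and $g(T_0(n))$. Third, combine these with the directly computed brackets $\langle C(1*0)\rangle$ and $\langle C(1*\infty)\rangle$ through the expansion $\langle C(1*T_0(n))\rangle = f(T_0(n))\langle C(1*0)\rangle + g(T_0(n))\langle C(1*\infty)\rangle$ to read off the top and bottom terms of $\langle C(1*T_0(n))\rangle$ and conclude $\mathrm{span}(\langle C(1*T_0(n))\rangle) = 100n + 16$. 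Since this span is strictly increasing in $n$ and never equals $\mathrm{span}(\langle H\rangle) = 16$, the links $C(1*T_0(n))$ are pairwise non-isotopic and none is isotopic to $H$; the infinitude of 2-sky-like links indistinguishable from $H$ by the Conway polynomial then follows, the 2-sky-likeness of $C_\pm(1*T_0(n))$ being visible from the construction.

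The main obstacle is the bracket-span induction and the attendant bookkeeping of extreme monomials. The span equals $100n + 16$ only if the candidate leading and trailing monomials actually survive every time I add Laurent polynomials---both in the three-term second entry of the recursion from Proposition \ref{BracketVectorSumProp} and in the final two-term linear combination. The crux is therefore to set up the induction hypothesis so that the competing top (respectively bottom) exponents of the contributing terms are separated, so that no cancellation occurs and the span is attained exactly rather than merely bounded above. Once the hypothesis isolates these exponents, each inductive step is a routine monomial comparison, and the argument closes.
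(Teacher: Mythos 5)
Your proposal matches the paper's proof essentially step for step: the Conway part via Lemma \ref{ConEqualLem}, $\con(1) = (0,1)^t$, $F(T_0(n)) = 0/1$, and Proposition \ref{ConVectorFormulasProp}(2), and the distinguishing part via a computer-assisted computation of $\br(T_0)$, an induction on extreme monomials of $f(T_0(n))$ and $g(T_0(n))$ using Proposition \ref{BracketVectorSumProp}, and the final expansion against $\langle C(1*0)\rangle$ and $\langle C(1*\infty)\rangle$ giving $\mathrm{span} = 100n+16 \neq 16$. One correction to your forecast of the bookkeeping: in the inductive step the two leading contributions $f(T_0(n))g(T_0)$ and $g(T_0(n))f(T_0)$ to $g(T_0(n+1))$ share the \emph{same} top exponent $40(n+1)-6$, and the leading term survives not because the exponents are separated but because the coefficients ($-1$ and $-n$) have the same sign and add to $-(n+1)$, which is exactly why the induction hypothesis must track the coefficient $-n$ in the leading monomial of $g(T_0(n))$ rather than only its degree.
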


\begin{rmk}
We have not proved conclusively that the Conway polynomial does not detect causality in the given setting. The link $C(1*T_0(n))$ might not correspond to the skies of a pair of causally related events. However, $C(1*T_0(n))$ satisfies the basic topological restrictions of a 2-sky-like link, giving strong evidence that the Conway polynomial does not detect causality in the given setting.
\end{rmk}

\begin{rmk}
Though it is not directly relevant to this setting, the Conway vector cannot distinguish the tangle $T_0(n)$ from the tangle $0$ even when the orientation of one of the strands is reversed. Let $T_A' = (-2)^\rho$, left-right oriented, and let $U_0$ be $T_B + T_C$ rotated 90 degrees clockwise and oriented as above.
Note that $T_0':=(T_A'*U_0)+((-T_A')*U_0)$ is $T_0$ but left-right oriented.
The above calculation gives that $\nabla(U_0^N) = 1$ and $\nabla(U_0)=0$, so
Proposition \ref{ConVectorFormulasProp} gives $\con(T_A'*U_0)=\con(T_A')$ and $\con((-T_A')*U_0)=\con(-T_A')$. Then $\con(T_0') = \con(T_A'+(-T_A')) = \con(0)$, since Proposition \ref{ConVectorFormulasProp} gives the Conway vector of a sum of tangles in terms of the Conway vectors of the tangle summands.
Thus, $T_0$ is indistinguishable from the tangle $0$ via the Conway polynomial no matter how it is oriented.
A similar calculation to the above will show that $C_+(1*T_0(n))$ and $C_-(1*T_0(n))$, when $T_0$ is left-right oriented, has the same Conway polynomial as $C_+(0)$ and $C_-(0)$ with the longitudinal components oriented oppositely relative to the meridional component. Thus the Conway polynomial cannot distinguish $C(1*T_0(n))$ from the connected sum of two Hopf links $H$.
\end{rmk}

\printbibliography

\end{document}